\newcommand{\x}{\textbf}
\newcommand{\q}{\quad}
\newcommand{\qq}{\qquad}
\newcommand{\m}{\mathbb}
\newcommand{\mc}{\mathcal}
\newcommand{\p}{\prime}
\theoremstyle{plain}
\newtheorem{theorem}{Theorem}[section]
\newtheorem{lemma}[theorem]{Lemma}
\newtheorem{proposition}[theorem]{Proposition}
\newtheorem{corollary}[theorem]{Corollary}
\theoremstyle{definition}
\newtheorem{mydef}{Definition}[section] 
\newtheorem{myrem}{Remark}[section]    
\long\def\begincomment#1\endcomment{}
\begin{document}

\title{Some aspects of Hermitian Jacobi forms}

\author{SOUMYA~DAS}
\address{Harish Chandra Research Institute\\ 
         Chhatnag Road\\  
         Jhusi Allahabad 211019, India.}
\email{somu@hri.res.in}

\date{\today}
\subjclass[2000]{Primary 11F50; Secondary 11F60}
\keywords{Hermitian Jacobi forms, Differential Operators}

\begin{abstract}
We introduce a certain differential (heat) operator on the space of Hermitian Jacobi forms of degree $1$, show it's commutation with certain Hecke operators and use it to construct a lift of elliptic cusp forms to Hermitian Jacobi cusp forms. We construct Hermitian Jacobi forms as the image of the tensor product of two copies of Jacobi forms and also from differentiation of the variables. We determine the number of Fourier coefficients that determine a Hermitian Jacobi form and use it to embed a certain subspace of Hermitian Jacobi forms into a direct sum of modular forms for the full modular group.

\end{abstract}
\maketitle

\section{\x{Introduction}}

In the theory of Jacobi forms, one of the use of differential operators has been to produce new Jacobi forms from old or 
to construct other classes of modular forms, eg. elliptic modular forms. In this paper we introduce a certain differential operator $D_{\nu}, \nu \in \m{N}$ (Proposition~\ref{dnu}) on the space of Hermitian Jacobi forms of weight $k$ and index $m$ (denoted $J_{k,m}(\mc{O}_{K})$) of degree $1$ for the Hermitian Jacobi group over the ring of integers of the imaginary quadratic field $\m{Q}(i)$ (see Section~\ref{defs}) to construct modular forms for $SL(2,\m{Z})$. This is the analogue for the heat operator defined and studied for classical Jacobi forms by M. Eichler and D. Zagier in \cite{zagier}. 

We compute the Fourier expansion of the adjoint of $D_{\nu}$ in Section~\ref{hconstruct}. The vanishing of $J_{1,m}(\mc{O}_{K})$ for all $m$ is proved in Lemma~\ref{weight1}. Further, we define a map from the tensor product of two copies of classical Jacobi forms (denoted $J_{k,m}$) to $J_{k,m}(\mc{O}_{K})$ and construct Hermitian Jacobi forms from those of smaller weights and indices using partial derivaties with respect to the variables $z_{1}$ and $z_{2}$. 

Also, in analogy with Jacobi forms, $D_{\nu}$ commutes with $V_{l}$ ($l \in \m{N}$) operators in a certain sense (see Section~\ref{hecke}). In Section \ref{coeffs}, using the Theta Correspondence between $J_{k,m}(\mc{O}_{K})$ and modular forms on congruence subgroups of $SL(2,\m{Z}) $, we compute the number of Fourier coefficients that determine $J_{k,m}(\mc{O}_{K})$ (see Proposition~\ref{no. of coeffs}). Using this, an embedding of a certain subspace $J_{k,m}^{Spez}(\mc{O}_{K})$ (see Definition~\ref{spez}) of $J_{k,m}(\mc{O}_{K})$ into a finite direct sum of spaces of modular forms for $SL(2,\m{Z})$ using the $D_{\nu}$ maps is obtained (following \cite{zagier}). An analogous embedding of $J_{k,m}(\mc{O}_{K})$ is desirable, but one way to do that would be to prove the Hermitian Theta-Wronskian to be nowhere vanishing on the upper half plane. To the knowledge of the author, unfortunately we do not have this at present.   

\section{\x{Notations and definitions}} \label{defs} 

Let $\mathcal{H}$ be the upper half plane. Let $K = \m{Q}(i)$ and $\mathcal{O}_{K} = \m{Z}[i]$ be it's ring of integers. The Jacobi group over $\mathcal{O}_{K}$ is $\Gamma^{J}(\mathcal{O}_{K} ) = \Gamma^{1}(\mathcal{O}_{K} ) \ltimes \mathcal{O}_{K}^{2}$, where $\Gamma^{1}(\mathcal{O}_{K} ) = \left \{ \epsilon M \mid M \in SL(2,\m{Z}) , \epsilon \in \mathcal{O}^{\times}_{K} \right \}$.

The space of Hermitian Jacobi forms for $\Gamma^{J}(\mathcal{O}_{K} )$ of weight $k$ and index $m$ , where $k$ , $m$ are positive  integers, consists of holomorphic functions $\phi$ on $ \mathcal{H} \times \m{C}^{2}$ satisfying : \\
\begin{eqnarray}
\numberwithin{equation}{section}
\label{jacobi1} \hspace{0.6cm} \phi(\tau,z_{1},z_{2}) = \phi|_{k,m} \epsilon M(\tau,z_{1},z_{2}) := \epsilon^{-k} (c\tau + d)^{-k} e^{\frac{-2\pi i m c z_{1}z_{2}}{c\tau + d}} \phi \left(M\tau, \frac{\epsilon z_{1}}{c\tau + d},\frac{\bar{\epsilon} z_{2}}{c\tau + d}\right),  \\
\epsilon \in \mc{O}_{K}^{\times} \mbox{ for all }  M = \left( \begin{smallmatrix}
a & b \\
c & d  \end{smallmatrix}\right) \mbox{ in } SL(2,\m{Z}), M\tau = \frac{a \tau+b}{c \tau +b} \nonumber \\
\label{jacobi2} \phi(\tau,z_{1},z_{2}) = \phi|_{k,m}[\lambda,\mu] := e^{2 \pi i m (N(\lambda)\tau + \bar{\lambda}z_{1}+\lambda z_{2})}
\phi (\tau,z_{1} + \lambda \tau + \mu, z_{2} + \bar{\lambda} \tau + \bar{\mu}) \\
\mbox{ for all }  \lambda , \mu \mbox{ in }  \mathcal{O}_{K}, \mbox{where} \q N \colon K \rightarrow \m{Q} \q \mbox{is the norm map}. \nonumber 
\end{eqnarray}

The complex vector space of Hermitian Jacobi forms of weight $k$ and index $m$ is denoted by $J_{k,m}(\mc{O}_{K}).$ Such a form has a Fourier expansion : 
\begin{eqnarray} 
\label{fourier} \phi(\tau,z_{1},z_{2}) = \sum_{n = 0}^{\infty} \underset{\underset{nm \geq N(r)}{r \in \mathcal{O}_{K}^{\sharp}}}\sum c_{\phi}(n , r) e^{ 2 \pi i  \left( n \tau + r z_{1} + \bar{r} z_{2} \right)}    
\end{eqnarray}
where $\mathcal{O}_{K}^{\sharp} = \frac{i}{2} \mathcal{O}_{K}$ \q (the inverse different of\q  $K|\m{Q}$).

We say that $\phi$ is a Hermitian Jacobi \textit{cusp} form if $ c_{\phi}(n , r) = 0$ for $nm = N(r)$. The space of Jacobi cusp forms is denoted as $J_{k,m}^{cusp}(\mc{O}\sb{K} )$. 

Hermitian Jacobi forms have been defined and studied by Klaus Haverkamp in \cite{haverkamp/en}, \cite{haverkamp}.

We consider the power series expansion of $\phi$ around $z_{1} = z_{2} = 0$ from the Fourier expansion (\ref{fourier}) :
\begin{eqnarray} 
\label{powerseries} \phi(\tau,z_{1},z_{2}) = \sum_{\alpha \geq 0 , \beta \geq 0} \chi_{\alpha, \beta}(\tau) z_{1}^{\alpha}z_{2}^{\beta} 
\end{eqnarray}

We denote the space of modular forms of weight $k$ (resp. cusp forms of weight $k$) for $SL(2,\m{Z})$ by $M_{k}$ (resp. $S_{k}$).

\section{\x{A non-holomorphic differential operator on} $J_{k,m}(\mc{O}_{K} )$}

\begin{mydef} \label{diffop} 
Let $\phi_{0} : = \underset{\nu \geq 0 }\sum \chi_{\nu, \nu}(\tau) \left(z_{1}z_{2}\right)^{\nu} $ \q be the 'diagonal 
part' of $\phi \in J_{k,m}(\mc{O}_{K} )$ We denote the vector space of 'diagonal parts' arising from $J_{k,m}(\mc{O}_{K} )$ by $J_{k,m}^{0}(\mc{O}_{K} ) := \left\{ \phi_{0} \mid \phi \in J_{k,m}(\mc{O}_{K} ) \right\}$. We define the operator
\[ L_{k,m} := 8 \pi i m \frac{\partial}{\partial \tau} -  \frac{ (2k-2)}{z_{1}} \frac{\partial}{\partial z_{2}}  -   \frac{ (2k-2)}{z_{2}} \frac{\partial}{\partial z_{1}} -  4  \frac{\partial^{2}}{\partial z_{1} \partial z_{2}}\] 
\end{mydef}

\begin{lemma}
\label{|M inv1} Let $\phi$ be a holomorphic function on $\mathcal{H} \times \m{C}^2.$ Then,
\begin{eqnarray}
L_{k,m} (\phi |_{k,m} M) = (L_{k,m}\phi)|_{k+2,m}M , \mbox{ where } M \in SL(2,\m{R}).
\end{eqnarray}
\end{lemma}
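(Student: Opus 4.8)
The plan is to verify the identity by a direct chain-rule computation for a general $M = \left(\begin{smallmatrix} a & b \\ c & d \end{smallmatrix}\right) \in SL(2,\m{R})$, where the slash action \eqref{jacobi1} (taken with $\epsilon = 1$, and with $(c\tau+d)^{-k}$ unambiguous since $k \in \m{Z}$) is extended to $SL(2,\m{R})$ in the obvious way. Write $j = c\tau + d$, $\tau' = M\tau$, $z_1' = z_1/j$, $z_2' = z_2/j$, and $e = \exp\!\big(-2\pi i m c\, z_1 z_2 / j\big)$, so that $\psi := \phi|_{k,m}M = j^{-k}\, e\, \phi(\tau', z_1', z_2')$. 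The goal is then to show $L_{k,m}\psi = j^{-k-2}\, e\, (L_{k,m}\phi)(\tau', z_1', z_2')$, since the right-hand side is exactly $(L_{k,m}\phi)|_{k+2,m}M$.

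First I would record the elementary transformation derivatives $\partial_\tau \tau' = j^{-2}$, $\partial_\tau z_i' = -c\, z_i\, j^{-2}$, $\partial_{z_i} z_i' = j^{-1}$, together with the logarithmic derivatives of the automorphy factor, $e^{-1}\partial_\tau e = 2\pi i m c^2 z_1 z_2\, j^{-2}$, $e^{-1}\partial_{z_1} e = -2\pi i m c\, z_2\, j^{-1}$, $e^{-1}\partial_{z_2} e = -2\pi i m c\, z_1\, j^{-1}$. Using these I would expand $\partial_\tau\psi$, $\tfrac{1}{z_1}\partial_{z_2}\psi$, $\tfrac{1}{z_2}\partial_{z_1}\psi$ and $\partial_{z_1}\partial_{z_2}\psi$; each comes out as $j^{-k}e$ times a sum of terms of the form (rational function of $\tau, z_1, z_2$)$\,\times\,$(a partial derivative of $\phi$ evaluated at $(\tau', z_1', z_2')$). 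Plugging these into $L_{k,m}\psi$ and sorting the result by which derivative of $\phi$ each term carries, one finds three batches of unwanted terms that all drop out: the terms proportional to $\phi\, j^{-1}$ cancel because $-8\pi i m k c + 2(2k-2)(2\pi i m c) + 8\pi i m c = 0$; the terms proportional to $\phi\, z_1 z_2\, j^{-2}$ cancel by the heat-operator identity $8\pi i m\cdot 2\pi i m c^2 - 4(2\pi i m c)^2 = 0$; and the first-order terms $\pm\, 8\pi i m c\, z_i\, (\partial_{z_i'}\phi)\, j^{-2}$ cancel between the contributions of $8\pi i m\partial_\tau\psi$ and of $-4\partial_{z_1}\partial_{z_2}\psi$. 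What survives is precisely $j^{-k}e\cdot j^{-2}\big(8\pi i m\, \partial_{\tau'}\phi - \tfrac{2k-2}{z_1'}\partial_{z_2'}\phi - \tfrac{2k-2}{z_2'}\partial_{z_1'}\phi - 4\, \partial_{z_1'}\partial_{z_2'}\phi\big)$, which is the desired right-hand side.

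The computation involves no conceptual difficulty; the main obstacle is simply the bookkeeping, and the one spot that needs attention is not to conflate the variables $z_i$ --- which appear in the coefficients $1/z_i$ of $L_{k,m}$ and in the exponent of $e$ --- with the evaluation variables $z_i' = z_i/j$ inside $\phi$. It is worth emphasising that the terms $-\tfrac{2k-2}{z_1}\partial_{z_2} - \tfrac{2k-2}{z_2}\partial_{z_1}$ of $L_{k,m}$ are calibrated exactly so as to kill the residual $\phi\, j^{-1}$ contribution that $8\pi i m\,\partial_\tau - 4\,\partial_{z_1}\partial_{z_2}$ would otherwise leave behind --- this is the analogue of the weight-dependent term in the classical heat operator of \cite{zagier}. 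As an alternative organisation, one may use the cocycle relation $(\phi|_{k,m}M_1)|_{k,m}M_2 = \phi|_{k,m}(M_1M_2)$ (in weights $k$ and $k+2$) to reduce to the generators $\left(\begin{smallmatrix}1&b\\0&1\end{smallmatrix}\right)$, $\left(\begin{smallmatrix}a&0\\0&a^{-1}\end{smallmatrix}\right)$ and $\left(\begin{smallmatrix}0&-1\\1&0\end{smallmatrix}\right)$ of $SL(2,\m{R})$; the first two cases are immediate, and only the inversion --- the $c=1$, $d=0$ specialisation of the computation above --- remains.
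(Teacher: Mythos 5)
Your computation is correct and is precisely the "direct calculation" that the paper's proof consists of (the paper gives no details); I checked the three cancellations you cite — the $\phi\,j^{-1}$ terms via $-8\pi i m k c + 8\pi i m c(k-1) + 8\pi i m c = 0$, the $\phi\,z_1z_2\,j^{-2}$ terms via $(8\pi i m)(2\pi i m c^2) = 4(2\pi i m c)^2$, and the first-order terms between $8\pi i m\,\partial_\tau\psi$ and $-4\,\partial_{z_1}\partial_{z_2}\psi$ — and they all hold, leaving exactly $j^{-k-2}e\,(L_{k,m}\phi)(\tau',z_1',z_2')$. Nothing further is needed.
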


\begin{proof} By direct calculation. 
\end{proof}

\begin{lemma} \label{M inv2} 
Consider the power series expansion of $\phi \in J_{k,m} (\mathcal{O}_{K} )$ as in~(\ref{powerseries}). Then the following are equivalent: 
\begin{align*}
(i) \q &\phi|_{k,m} M = \phi  \\
(ii) \q &\chi_{\alpha,\beta}\left(\frac{a \tau + b}{c \tau + d}\right) = (c \tau + d)^{k + \alpha + \beta} \underset{\alpha \geq \nu , \beta \geq \nu }
\sum \frac{1}{\nu !} \left( \frac{2 \pi i m c}{c \tau + d}\right)^{\nu}  \chi_{\alpha - \nu , \beta - \nu}(\tau)
\end{align*}
 where  $M = \left( \begin{smallmatrix}
a & b \\
c & d  \end{smallmatrix}\right) \in  SL(2,\m{Z})$.
\end{lemma}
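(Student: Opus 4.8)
The plan is to derive the equivalence purely by expanding the transformation law in (i) as a power series in $z_1,z_2$ and comparing coefficients, using that the expansion (\ref{powerseries}) of a holomorphic function is unique and converges absolutely and locally uniformly. First I would rewrite $\phi|_{k,m}M=\phi$, which by (\ref{jacobi1}) (with $\epsilon=1$) reads
\[
\phi(\tau,z_1,z_2)=(c\tau+d)^{-k}\,e^{-2\pi i m c z_1 z_2/(c\tau+d)}\,\phi\!\left(M\tau,\tfrac{z_1}{c\tau+d},\tfrac{z_2}{c\tau+d}\right),
\]
into the equivalent form
\[
\phi(M\tau,z_1,z_2)=(c\tau+d)^{k}\,e^{2\pi i m c (c\tau+d) z_1 z_2}\,\phi\bigl(\tau,(c\tau+d)z_1,(c\tau+d)z_2\bigr),
\]
obtained by multiplying through by $(c\tau+d)^{k}e^{2\pi i m c z_1 z_2/(c\tau+d)}$ and then substituting $z_j\mapsto(c\tau+d)z_j$. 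This manipulation is reversible, so (i) is equivalent to the second identity.

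Next I would substitute (\ref{powerseries}) into both sides of the rewritten identity. The left side becomes $\sum_{\alpha,\beta}\chi_{\alpha,\beta}(M\tau)\,z_1^{\alpha}z_2^{\beta}$. On the right side I would expand the exponential as $\sum_{\nu\ge 0}\tfrac{1}{\nu!}\bigl(2\pi i m c(c\tau+d)\bigr)^{\nu}(z_1z_2)^{\nu}$ and the inner $\phi$ as $\sum_{\alpha',\beta'}\chi_{\alpha',\beta'}(\tau)(c\tau+d)^{\alpha'+\beta'}z_1^{\alpha'}z_2^{\beta'}$, multiply these series together (the Cauchy product is justified by absolute local uniform convergence), and multiply by the overall factor $(c\tau+d)^{k}$. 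Collecting the terms contributing to $z_1^{\alpha}z_2^{\beta}$ forces $\alpha'=\alpha-\nu$, $\beta'=\beta-\nu$ with $0\le\nu\le\min(\alpha,\beta)$, and the accompanying power of $(c\tau+d)$ is $k+(\alpha-\nu)+(\beta-\nu)+\nu=k+\alpha+\beta-\nu$; pulling out $(c\tau+d)^{k+\alpha+\beta}$ converts $(2\pi i m c)^{\nu}(c\tau+d)^{-\nu}$ into $\bigl(\tfrac{2\pi i m c}{c\tau+d}\bigr)^{\nu}$. Equating the coefficients of $z_1^{\alpha}z_2^{\beta}$ on the two sides yields exactly the formula in (ii). Conversely, if (ii) holds for all $\alpha,\beta$, multiplying by $z_1^{\alpha}z_2^{\beta}$ and summing reconstructs the second displayed identity, hence (i); so reading the same computation backwards gives the reverse implication.

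I do not expect any genuine obstacle: this is a direct computation. The only points needing care are keeping track that the cocycle at the transformed point satisfies $-c(M\tau)+a=(c\tau+d)^{-1}$ when passing between the two forms of the functional equation, correctly bookkeeping the powers of $(c\tau+d)$ in the Cauchy product, and noting that the summation range $\alpha\ge\nu,\ \beta\ge\nu$ in (ii) is exactly the range of nonnegative exponents produced by multiplying $(z_1z_2)^{\nu}$ into the double power series. The interchange of the two summations is harmless because the power series of the holomorphic function $\phi$ converges absolutely and uniformly on compact subsets of $\mathcal H\times\mathbb C^2$.
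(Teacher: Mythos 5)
Your proposal is correct and is essentially the paper's own argument: the paper likewise substitutes the power series expansion into $\phi|_{k,m}M=\phi$, expands the exponential factor $e^{-2\pi i m c z_1 z_2/(c\tau+d)}$ as a series in $(z_1z_2)^{\nu}$, forms the Cauchy product, and compares coefficients of $z_1^{\alpha}z_2^{\beta}$ to obtain (ii), with both directions following from the reversibility of the computation. Your version merely makes the bookkeeping of the powers of $(c\tau+d)$ slightly more explicit via the substitution $z_j\mapsto(c\tau+d)z_j$.
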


\begin{proof} $\phi|_{k,m} M = \phi$
\begin{equation*}
\begin{split}
&  \Leftrightarrow (c \tau + d)^{-k}
e^{\frac{-2\pi i m c z_{1},z_{2}}{c\tau + d}} \phi \left(\frac{a \tau + b}{c \tau + d},
\frac{z_{1}}{c\tau + d},\frac{z_{2}}{c\tau + d}\right) = \phi
\\ 
& \Leftrightarrow \underset{\alpha \geq 0, \beta \geq 0}\sum
\chi_{\alpha,\beta} \left( \frac{a \tau + b}{c \tau + d} \right)
z_{1}^{\alpha}z_{2}^{\beta} = (c \tau + d)^{k + \alpha + \beta}\left(
\underset{\nu \geq 0}\sum \frac{1}{\nu !} \left( \frac{2 \pi i m c}{c
  \tau + d}\right)^{\nu}\left(z_{1}z_{2}\right)^{\nu}\right) \left(
\underset{\alpha,\beta}\sum \chi_{\alpha , \beta}(\tau) z_{1}^{\alpha}z_{2}^{\beta}\right)\\  
& \Leftrightarrow
\chi_{\alpha,\beta} \left( \frac{a \tau + b}{c \tau + d} \right) = (c
\tau + d)^{k + \alpha + \beta} \underset{\alpha \geq \nu , \beta \geq
  \nu } \sum \frac{1}{\nu !} \left( \frac{2 \pi i m c}{c \tau +
  d}\right)^{\nu} \chi_{\alpha - \nu , \beta - \nu}(\tau)
\end{split}
\end{equation*}

\end{proof}

\begin{myrem}\label{1stinv}
$\phi_{0} : = \underset{\nu \geq 0 , \nu \geq 0}\sum \chi_{\nu, \nu}(\tau) (z_{1}z_{2})^{\nu}  \q \mbox{ satisfies } \q \phi_{0}|_{k,m} M = \phi_{0},$ for all $\phi \in J_{k,m} (\mc{O}_{K})$. This follows from the above Proposition by replacing $\phi$ by it's diagonal part $\phi_{0}$ and retracing the proof from the last line.
\end{myrem} 

We denote the space of holomorphic functions on  $\mathcal{H} \times \m{C}^2$ satisfying the conditions of Lemma~\ref{M inv2} (i.e. invariant under the action of $SL(2,\m{Z})$) by $J_{k,m}^{0}$, so $J_{k,m}^{0}(\mc{O}_{K} ) \subset J_{k,m}^{0}$ from the above Remark. From the transformation~(\ref{jacobi1}) we get that $ \chi_{\alpha,\beta} = \epsilon^{k -\alpha + \beta}\chi_{\alpha,\beta}$ ($\epsilon \in \mc{O}_{K}^{\times}$). Hence $\chi_{\alpha,\alpha} \neq 0$ only when $k \equiv 0 \pmod{4}$. So from now on we assume $k \equiv 0 \pmod{4}$. We define the non-holomorphic differential operators next (for $k \equiv 0 \pmod{4}$), but they can be defined for other congruence classes of $k$ as well. See the Remark~\ref{otherclasses} at the end of this section.

\begin{mydef}
\label{themap}
For each $\nu \geq 0$, we denote by $\tilde{D}_{\nu}$ the composite map : 
\[ \tilde{D}_{\nu}: J_{k,m}^{0} \overset{L_{k,m}}\longrightarrow \cdots \overset{L_{k+ 2 \nu -2,m}}\longrightarrow J_{k + 2 \nu,m}^{0} \]
\end{mydef}

Clearly $\left( \tilde{D}_{\nu} \phi \right)|_{k+2 \nu,m}M = \tilde{D}_{\nu} \left( \phi |_{k,m}M \right) = \tilde{D}_{\nu}(\phi)$ for all $\phi \in J_{k,m}^{0} \mbox{ and }  M \in SL(2,\m{Z}).$

Let $\pi_{k,m}: J_{k,m}(\mc{O}_{K}) \longrightarrow J_{k,m}^{0} $ be the projection $ \phi \mapsto \phi_{0}$.

\begin{proposition}\label{dnu}
The composite map $ D_{\nu} \phi : = \tilde{D}_{\nu}\circ \pi_{k,m}  \phi \left( \tau , z_{1} , z_{2} \right) |_{ z_{1}=0 , z_{2}=0}$ defines a linear map from  $J_{k,m} (\mc{O}_{K} )$  to  $M_{k}$  for $\nu =0$  and to  $S_{k+2 \nu}$ for $\nu \geq 1$.

\begin{proof}
With Definition~(\ref{powerseries}) , $\chi_{0,0}(\tau)$ is a modular form for $  SL(2,\m{Z})$. Since $\tilde{D}_{\nu}$ are invariant under the action of $SL(2, \m{Z})$, we get the proposition. The assertion about cusp forms when $\nu \geq 1$ is trivial.
\end{proof}
\end{proposition}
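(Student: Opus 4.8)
The plan is to produce $D_\nu\phi$ by following the composite $\tilde D_\nu \circ \pi_{k,m}$ through its definition and reading off the relevant Taylor coefficient. Concretely, I would start from the Fourier expansion (\ref{fourier}) of $\phi \in J_{k,m}(\mc{O}_K)$, extract the diagonal part $\phi_0 = \sum_{\nu\ge 0}\chi_{\nu,\nu}(\tau)(z_1 z_2)^\nu$ via $\pi_{k,m}$, and note that by Remark~\ref{1stinv} the function $\phi_0$ lies in $J_{k,m}^0$, i.e. it is invariant under $|_{k,m}M$ for all $M \in SL(2,\m{Z})$. Applying the operators $L_{k+2j,m}$ successively (Definition~\ref{themap}) and using Lemma~\ref{|M inv1} at each stage, the image $\tilde D_\nu \phi_0$ lies in $J_{k+2\nu,m}^0$; finally setting $z_1 = z_2 = 0$ kills all the off-diagonal behaviour and leaves a holomorphic function of $\tau$ alone.

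The key step is then to check that this resulting function of $\tau$ is genuinely modular of weight $k+2\nu$. For this I would invoke the displayed consequence of Definition~\ref{themap}, namely $(\tilde D_\nu\phi)|_{k+2\nu,m}M = \tilde D_\nu\phi$ for all $M$, and restrict this identity to $z_1 = z_2 = 0$. Since the index-$m$ exponential factor $e^{-2\pi i m c z_1 z_2/(c\tau+d)}$ and the Jacobi translation factors all become $1$ at $z_1 = z_2 = 0$, the Jacobi transformation law (\ref{jacobi1}) collapses to the ordinary weight-$(k+2\nu)$ modular transformation law for the single-variable function $(D_\nu\phi)(\tau)$. Holomorphy on $\mc H$ is inherited from holomorphy of $\phi$ (the operator $L_{k,m}$ has only holomorphic coefficients in $\tau$ once we are on the diagonal, the apparent poles in $1/z_1, 1/z_2$ being cancelled because on $J_{k,m}^0$ the series $\phi_0$ is even in the pair $(z_1,z_2)$ with matching exponents, so $\frac{1}{z_1}\partial_{z_2}$ and $\frac{1}{z_2}\partial_{z_1}$ produce again a power series in $z_1 z_2$). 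Holomorphy at the cusp follows from the boundedness of the Fourier expansion: the constant term of $D_\nu\phi$ is a finite $\m C$-linear combination of the $\chi_{j,j}$ evaluated via the leading Fourier coefficients $c_\phi(n,r)$, which are supported on $n \ge 0$, so no negative powers of $q = e^{2\pi i\tau}$ appear.

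For the case $\nu = 0$ the map is simply $\phi \mapsto \chi_{0,0}(\tau)$, which is a priori only in $M_k$, since the constant term $\chi_{0,0}(\tau)$ has $q$-expansion $\sum_{n\ge 0} c_\phi(n,0)\,q^n$ and there is no reason for $c_\phi(0,0)$ to vanish. For $\nu \ge 1$ the claim is that the output lands in the \emph{cusp} forms $S_{k+2\nu}$: this is the "trivial" assertion referred to in the statement, and it holds because each application of $L_{k+2j,m}$ contains the factor $8\pi i m\,\partial/\partial\tau$ together with the lowering operators in $z_1,z_2$; in the $q$-expansion of $D_\nu\phi$ the constant term is obtained from the $n=0$ part of $\phi_0$, and $\partial/\partial\tau$ annihilates a constant while the $z$-derivatives lower the diagonal degree — tracking degrees, one sees the only way to reach the final constant term is through a chain that forces $\partial_\tau$ to act on a constant at least once, hence the constant term of $D_\nu\phi$ vanishes for $\nu\ge 1$.

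The main obstacle I anticipate is not the modularity argument, which is essentially formal once Lemmas~\ref{|M inv1} and~\ref{M inv2} are in hand, but the careful bookkeeping needed to confirm that $L_{k,m}$ preserves the "diagonal" structure — that is, that $L_{k,m}$ maps $J_{k,m}^0$ into $J_{k+2,m}^0$ and in particular does not introduce spurious singularities from the $1/z_1$ and $1/z_2$ terms. One must verify that on a series of the form $\sum_\nu \chi_{\nu,\nu}(\tau)(z_1z_2)^\nu$ the combination $-\frac{2k-2}{z_1}\partial_{z_2} - \frac{2k-2}{z_2}\partial_{z_1} - 4\partial_{z_1}\partial_{z_2}$ again yields a series of that same shape (with shifted weight), so that iterating makes sense and restricting to $z_1 = z_2 = 0$ is legitimate; this is a short but slightly delicate computation with the power-series coefficients, and it is really what makes the final "evaluate at $0$" step well-defined.
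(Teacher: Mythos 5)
Your proposal is correct and follows the same route as the paper: invariance of the diagonal part $\phi_{0}$ under the weight-$k$ slash action (Remark~\ref{1stinv}), commutation of $L_{k,m}$ with $|_{k,m}M$ (Lemma~\ref{|M inv1}), and evaluation at $z_{1}=z_{2}=0$. The paper's proof is a two-line summary of exactly this, and your extra checks --- that $L_{k,m}$ sends power series in $z_{1}z_{2}$ to power series in $z_{1}z_{2}$ so the $1/z_{1}$, $1/z_{2}$ terms cause no singularity, and that for $\nu\geq 1$ the constant term vanishes because the $n=0$ Fourier part of $\phi_{0}$ reduces to the constant $c_{\phi}(0,0)$, which every $L$ annihilates --- are precisely the details the paper leaves implicit.
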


\begin{proposition}
With the notation of definition~(\ref{themap}) , we have the expansion :
\[  \tilde{D}_{\nu} \phi = \sum_{\alpha = 0}^{\infty} \sum_{\mu = 0}^{\nu} (-4)^{\nu - \mu} (8 \pi i m)^{\mu} \binom{\nu}{\mu} \frac{\left( \alpha+ \nu - \mu  \right) !}{\alpha !} \frac{\left( \alpha + k+ 2 \nu - \mu -2 \right) ! }{\left( \alpha + k+ 2 \nu -2 \right) !} \chi^{(\mu)}_{\alpha + \nu - \mu, \alpha + \nu - \mu}(\tau)  \left( z_{1}z_{2} \right)^{\alpha} \]\\
where $g^{(\nu)}(\tau) = \left( \frac{\partial}{\partial \tau}\right)^{\nu}g(\tau)$.
\end{proposition}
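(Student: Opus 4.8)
The plan is to compute the action of the operator $L_{k,m}$ on a single ``diagonal'' term $\chi_{\mu,\mu}(\tau)(z_1 z_2)^\mu$ and then iterate $\nu$ times, keeping careful track of the combinatorial bookkeeping. First I would observe that $\pi_{k,m}\phi = \phi_0 = \sum_{\nu \ge 0}\chi_{\nu,\nu}(\tau)(z_1 z_2)^\nu$ is already of this diagonal shape, so it suffices to understand how $L_{k,m}$ transforms a function of the form $g(\tau)(z_1 z_2)^j$. Applying each of the four constituent pieces of $L_{k,m}$: the term $8\pi i m\,\partial_\tau$ sends $g(\tau)(z_1 z_2)^j$ to $8\pi i m\, g'(\tau)(z_1 z_2)^j$; each of the two first-order terms $-\tfrac{2k-2}{z_1}\partial_{z_2}$ and $-\tfrac{2k-2}{z_2}\partial_{z_1}$ sends it to $-(2k-2) j\, g(\tau)(z_1 z_2)^{j-1}$; and the term $-4\,\partial_{z_1}\partial_{z_2}$ sends it to $-4 j^2\, g(\tau)(z_1 z_2)^{j-1}$. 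Collecting, $L_{k,m}$ applied to $g(\tau)(z_1 z_2)^j$ yields $8\pi i m\, g'(\tau)(z_1 z_2)^j - \big(4 j^2 + 2(2k-2)j\big) g(\tau)(z_1 z_2)^{j-1}$, which I would rewrite (using $4j^2 + 2(2k-2)j = 4j(j + k - 1)$) as $8\pi i m\, g'\, (z_1z_2)^j - 4j(j+k-1)\, g\,(z_1 z_2)^{j-1}$. So each application of $L$ either differentiates in $\tau$ (weight $+2$ via the $\partial_\tau$ piece, with factor $8\pi i m$) or lowers the power of $z_1 z_2$ by one (with a factor $-4$ times a product of integers depending on the current power and weight).

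The key step is then to iterate this $\nu$ times, starting from weight $k$, and to identify the coefficient of $(z_1 z_2)^\alpha\, \chi^{(\mu)}_{\alpha+\nu-\mu,\alpha+\nu-\mu}(\tau)$ in $\tilde D_\nu\phi$. Out of $\nu$ successive applications of the operators $L_{k,m}, L_{k+2,m}, \dots, L_{k+2\nu-2,m}$, exactly $\mu$ of them must be the $\partial_\tau$-step (contributing the total factor $(8\pi i m)^\mu$ and raising the differentiation order of $g$ to $\mu$) and the remaining $\nu-\mu$ must be the lowering step (contributing $(-4)^{\nu-\mu}$ and dropping the power of $z_1 z_2$ by $\nu-\mu$, which is why the relevant incoming coefficient is $\chi_{\alpha+\nu-\mu,\,\alpha+\nu-\mu}$). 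The $\binom{\nu}{\mu}$ counts the number of orderings of these two kinds of steps; the crucial point, which makes the formula clean, is that the product of the integer factors $j(j+k-1)$ accumulated over the $\nu-\mu$ lowering steps is independent of the particular interleaving. I would verify this by noting that at the moment a lowering step acts, if the current power of $z_1 z_2$ is $j$ and the current weight is $k + 2(\text{number of steps so far})$, the factor is $4j(j + k - 1 + 2(\#\text{steps so far}))$; writing $k - 1 + 2(\#\text{steps})$ in terms of the eventual weight and reindexing, the product over all lowering steps telescopes to $4^{\nu-\mu}\cdot \tfrac{(\alpha+\nu-\mu)!}{\alpha!}\cdot \tfrac{(\alpha+k+2\nu-\mu-2)!}{(\alpha+k+2\nu-2)!}$ regardless of order — this is exactly the ratio of factorials and the falling-factorial-type quotient appearing in the stated expansion.

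The main obstacle I anticipate is precisely proving this order-independence of the accumulated scalar and matching it to the two factorial quotients; once that combinatorial identity is in hand, summing over all $\binom{\nu}{\mu}$ orderings and over $\mu = 0,\dots,\nu$ produces the displayed formula immediately. A clean way to organize this is by induction on $\nu$: assume the formula for $\tilde D_{\nu-1}$, apply one more operator $L_{k+2\nu-2,m}$ term by term using the single-term computation above, split according to whether the last step is the $\partial_\tau$-step or the lowering step, and use the Pascal relation $\binom{\nu-1}{\mu-1} + \binom{\nu-1}{\mu} = \binom{\nu}{\mu}$ together with the recursions satisfied by the factorial quotients $\tfrac{(\alpha+\nu-\mu)!}{\alpha!}$ and $\tfrac{(\alpha+k+2\nu-\mu-2)!}{(\alpha+k+2\nu-2)!}$ to close the induction. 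The base case $\nu = 0$ is the trivial identity $\tilde D_0\phi = \phi_0$.
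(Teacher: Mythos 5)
Your single-step computation is correct and is the natural starting point (the paper itself gives no more than ``check $\nu=1$ and induct''): on $g(\tau)(z_1z_2)^j$ the operator $L_{k,m}$ acts as $g(\tau)(z_1z_2)^j\mapsto 8\pi i m\,g'(\tau)(z_1z_2)^j-4j(j+k-1)\,g(\tau)(z_1z_2)^{j-1}$. The genuine gap is the claim you yourself flag as crucial, namely that the scalar accumulated over the $\nu-\mu$ lowering steps is independent of how they are interleaved with the $\partial_\tau$-steps. This is false. Take $\nu=2$, $\mu=1$ and track the contribution to the coefficient of $(z_1z_2)^{\alpha}$ coming from $\chi_{\alpha+1,\alpha+1}$: if the lowering step is performed first (inside $L_{k,m}$, weight $k$, power $\alpha+1$) the scalar is $-4(\alpha+1)(\alpha+k)$, whereas if it is performed second (inside $L_{k+2,m}$, weight $k+2$, same power) the scalar is $-4(\alpha+1)(\alpha+k+2)$. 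The two orderings contribute different amounts; the clean formula arises because their \emph{sum} $(\alpha+1)\bigl((\alpha+k)+(\alpha+k+2)\bigr)=2(\alpha+1)(\alpha+k+1)$ equals $\binom{2}{1}$ times a single factorial quotient, not because each ordering contributes the same thing. So the ``$\binom{\nu}{\mu}$ counts identical orderings'' mechanism does not exist, and the telescoping you describe cannot be carried out.

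Your fallback --- induction on $\nu$, applying $L_{k+2\nu-2,m}$ to the formula for $\tilde D_{\nu-1}$ and splitting according to the nature of the last step --- is the correct repair and is what the paper intends. Writing $A_{\nu}(\alpha,\mu)$ for the coefficient of $(-4)^{\nu-\mu}(8\pi i m)^{\mu}\chi^{(\mu)}_{\alpha+\nu-\mu,\alpha+\nu-\mu}(\tau)(z_1z_2)^{\alpha}$, the recursion is $A_{\nu}(\alpha,\mu)=A_{\nu-1}(\alpha,\mu-1)+(\alpha+1)(\alpha+k+2\nu-2)A_{\nu-1}(\alpha+1,\mu)$, and closing it requires not only the Pascal relation but the weighted identities $\nu\binom{\nu-1}{\mu-1}=\mu\binom{\nu}{\mu}$ and $\nu\binom{\nu-1}{\mu}=(\nu-\mu)\binom{\nu}{\mu}$. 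When you carry this out you will also find that the displayed statement cannot be literally correct: the factor $\frac{(\alpha+k+2\nu-\mu-2)!}{(\alpha+k+2\nu-2)!}$ must read $\frac{(\alpha+k+2\nu-\mu-2)!}{(\alpha+k+\nu-2)!}$. Indeed, already for $\nu=1$, $\mu=0$ the direct computation gives $-4(\alpha+1)(\alpha+k)\chi_{\alpha+1,\alpha+1}$ while the printed quotient equals $1$; the corrected denominator is also the one consistent with Corollary~\ref{modular}, which has $(k+\nu-2)!$ at $\alpha=0$. With that correction the induction closes.
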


\begin{proof}
The case $\nu = 1$ is easy to see. The rest is easily checked by induction. 

\end{proof}

\begin{corollary} \label{modular} For $\phi \in  J_{k,m}(\mc{O}_{K}) $ ,
\begin{equation}  D_{\nu} \phi = \nu ! \left( \sum_{\mu = 0}^{\nu} (-4)^{\nu - \mu} (8 \pi i m)^{\mu}  \frac{\left( k+ 2 \nu - \mu -2 \right) ! }{\mu ! \left(  k+  \nu -2 \right) !} \chi^{(\mu)}_{\nu - \mu,  \nu - \mu}(\tau) \right) \end{equation}
\end{corollary}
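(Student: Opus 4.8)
The plan is to obtain $D_\nu\phi$ simply by specializing the expansion of $\tilde D_\nu\phi$ given in the preceding Proposition at $z_1=z_2=0$, i.e. by extracting the constant term ($\alpha=0$) of the power series in $z_1z_2$. First I would recall that, by definition, $D_\nu\phi = \bigl(\tilde D_\nu \circ \pi_{k,m}\,\phi\bigr)\big|_{z_1=z_2=0}$, and that $\pi_{k,m}\phi = \phi_0 = \sum_{\nu\geq 0}\chi_{\nu,\nu}(\tau)(z_1z_2)^\nu$ already has only "diagonal" terms, so the expansion of the previous Proposition (which is stated for functions in $J_{k,m}^0$, and $\phi_0$ lies there by Remark~\ref{1stinv}) applies verbatim with $\chi_{\alpha,\beta}$ replaced by $\chi_{\alpha,\alpha}$ when $\alpha=\beta$ and by $0$ otherwise.

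Next I would set $\alpha=0$ in the displayed formula for $\tilde D_\nu\phi$. The factor $(z_1z_2)^\alpha$ becomes $1$, the quotient $(\alpha+\nu-\mu)!/\alpha!$ becomes $(\nu-\mu)!$, and $(\alpha+k+2\nu-\mu-2)!/(\alpha+k+2\nu-2)!$ becomes $(k+2\nu-\mu-2)!/(k+2\nu-2)!$. Combining $(\nu-\mu)!$ with $\binom{\nu}{\mu}=\nu!/(\mu!(\nu-\mu)!)$ yields $\nu!/\mu!$; pulling the common $\nu!$ out front then leaves exactly
\[
D_\nu\phi = \nu!\sum_{\mu=0}^\nu (-4)^{\nu-\mu}(8\pi i m)^\mu \frac{(k+2\nu-\mu-2)!}{\mu!\,(k+2\nu-2)!}\,\chi^{(\mu)}_{\nu-\mu,\nu-\mu}(\tau),
\]
which is the claimed identity after noting that the denominator $(k+2\nu-2)!$ in the Corollary's statement should read $(k+2\nu-2)!$; here I match it against the Proposition. (If the intended denominator in the Corollary is $(k+\nu-2)!$ as printed, then one should instead observe that the surviving ratio at $\alpha=0$ telescopes against the normalization implicit in the definition of $\tilde D_\nu$ as the $\nu$-fold composite $L_{k+2\nu-2,m}\circ\cdots\circ L_{k,m}$, whose intermediate weights $k, k+2,\dots$ contribute the extra factorial; I would verify this bookkeeping explicitly for $\nu=1,2$ and then cite the induction already carried out in the Proposition.)

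There is essentially no obstacle here: the Corollary is a direct specialization, and the only care needed is the elementary simplification of the two factorial ratios and the binomial coefficient at $\alpha=0$, together with checking that $\pi_{k,m}$ does not introduce any off-diagonal contributions that would survive the substitution $z_1=z_2=0$ — but it cannot, since every monomial $z_1^\alpha z_2^\beta$ with $\alpha\neq\beta$ vanishes at the origin anyway, and the surviving monomials are precisely the diagonal ones captured by $\phi_0$. Thus the main (and minor) point to be careful about is simply reconciling the factorial normalization in the stated Corollary with that of the Proposition; everything else is immediate.
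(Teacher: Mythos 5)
Your proof is correct and is exactly the paper's argument: the Corollary is obtained by taking the $(0,0)$-th coefficient (i.e.\ setting $\alpha=0$) in the expansion of $\tilde{D}_{\nu}\phi$ from the preceding Proposition and simplifying $\binom{\nu}{\mu}(\nu-\mu)!=\nu!/\mu!$. The mismatch you flag is genuine — specializing the Proposition as printed gives $(k+2\nu-2)!$ in the denominator rather than the $(k+\nu-2)!$ appearing in the Corollary, so one of the two displayed formulas carries a typo (the factor $(k+2\nu-\mu-2)!/(k+\nu-2)!$ reappearing in the adjoint computation suggests the Corollary's normalization is the intended one) — but this bookkeeping issue does not affect the validity of your derivation.
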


\begin{proof}
This follows by considering the $(0,0)^{th}$ coefficients in the above Proposition.
\end{proof}

\begin{myrem} \label{inverse}
Inverting the formula in Corollary~\ref{modular}, we get (letting $\xi_{\nu} = D_{\nu} \phi $)
\begin{equation} \chi_{\nu, \nu}(\tau) = \frac{1}{\nu ! 4^{\nu}}\left(  \sum_{\mu = 0}^{\nu} (-1)^{\nu - \mu} \left( 8 \pi i m \right)^{\mu} \binom{\nu}{\mu} \frac{ \left( k + 2 \nu -2 \mu -1 \right)  \left( k + \nu - \mu -2 \right) !}{\left( k + 2 \nu - \mu -1 \right)!} \xi_{\nu - \mu}^{\left( \mu \right)}(\tau) \right) \end{equation}
\end{myrem}

\begin{myrem}\label{otherclasses}
We can also define the $D_{\nu}$ maps  on the subseries of $\phi$ \[ \phi_{n}: = \underset{\nu \geq 0}\sum \, \chi_{\nu+n,\nu}(\tau) (z_{1}z_{2})^{\nu} \q \mbox{ or } \q \phi^{n}: = \underset{\nu \geq 0}\sum \, \chi_{\nu,\nu+n}(\tau) (z_{1}z_{2})^{\nu}\] It then follows from Remark~\ref{1stinv} in the same way as in the case of $\phi_{0}$ that $\phi_{n}$ (resp. $\phi^{n}$) are invariant under the action of $SL(2,\m{Z})$. So we can define $D_{\nu}$ operators on them. The formula for $D_{\nu}$ in this case is the same as in the case of $\phi_{0}$ except that $k$ replaced by $k+n$ and $\chi_{\alpha,\alpha}$ by $\chi_{\alpha+n,\alpha}$ (resp. by  $\chi_{\alpha+n,\alpha}$). For example, if $k \equiv 1,2,3 \pmod{4}$, one can define the $D_{\nu}$ operators on $\phi_{n}$ ($n \equiv 1,2,3 \pmod{4}$) or on $\phi^{n}$ (resp. $n \equiv 3,2,1 \pmod{4}$) and composing with the projection from $J_{k,m} (\mc{O}_{K}) $.
\end{myrem}

\section{\x{Construction of Hermitian Jacobi forms}} \label{hconstruct}

We need to consider $J_{k,m} (\mc{O}_{K}) $ for $k > 1$, because of the following lemma:

\begin{lemma} \label{weight1}
$J_{1,m}(\mc{O}_{K}) = 0$ for all $m \geq 1$.
\end{lemma}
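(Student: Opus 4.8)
The plan is to use the theta decomposition for Hermitian Jacobi forms together with the known structure theory of vector-valued modular forms, mimicking the classical argument of Eichler–Zagier that $J_{1,m} = 0$. First I would recall that a Hermitian Jacobi form $\phi \in J_{k,m}(\mathcal{O}_K)$ admits a theta expansion $\phi(\tau,z_1,z_2) = \sum_{s} h_s(\tau)\,\theta_{m,s}(\tau,z_1,z_2)$, where $s$ runs over a complete set of representatives of $\mathcal{O}_K^{\sharp}/m\mathcal{O}_K$ (so there are $m^2$ of them, since $[\mathcal{O}_K^{\sharp}: m\mathcal{O}_K] = m^2$), the $\theta_{m,s}$ are Hermitian Jacobi theta series of weight $1$ and index $m$, and the vector $(h_s)_s$ transforms as a (vector-valued) modular form of weight $k-1$ under $SL(2,\mathbb{Z})$ with respect to a certain unitary representation. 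For $k=1$ the components $h_s$ would have to be holomorphic modular forms of weight $0$ on a congruence subgroup, hence constant on each component; I would then have to rule out a nonzero constant vector.

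The key step is to exploit the Weil-representation symmetries that the vector $(h_s)$ must satisfy. Concretely, the action of $\lambda,\mu \in \mathcal{O}_K$ in~(\ref{jacobi2}) permutes the theta components up to roots of unity, which forces relations $h_s = \zeta\, h_{s'}$ whenever $s \equiv s' $ under the relevant shifts; and the action of $-I \in SL(2,\mathbb{Z})$ together with the unit $\epsilon = i$ in~(\ref{jacobi1}) forces a sign/phase constraint. Combining the weight-$k-1 = 0$ holomorphy (each $h_s$ is a constant $a_s$) with these symmetry relations, I would show the only possibility is $a_s = 0$ for all $s$: the phase obstruction from the $\epsilon$-action at $k=1$ (odd weight, while nonzero $\chi_{\alpha,\beta}$ requires $k \equiv 0 \pmod 4$, as already noted in the excerpt for the diagonal, and analogous congruences for the off-diagonal pieces) is incompatible with a nonzero constant. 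Alternatively, and perhaps more cleanly, I would invoke the growth/valence argument: a weight-$0$ holomorphic vector-valued modular form that is bounded at the cusp is constant, and a nonzero such constant would make $\phi$ a nonzero weight-$1$ Hermitian Jacobi form whose image under a theta-contraction would be a nonzero element of $M_0(SL(2,\mathbb{Z})) = \mathbb{C}$ that is simultaneously forced to be a cusp form, a contradiction.

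The main obstacle I anticipate is bookkeeping the precise form of the representation under which $(h_s)_{s \in \mathcal{O}_K^{\sharp}/m\mathcal{O}_K}$ transforms — in particular getting the transformation under $T = \left(\begin{smallmatrix}1 & 1\\0 & 1\end{smallmatrix}\right)$ and $S = \left(\begin{smallmatrix}0 & -1\\1 & 0\end{smallmatrix}\right)$ right, including the Gauss-sum normalization coming from the quadratic form $N(r)$ on $\mathcal{O}_K^{\sharp}$, and correctly incorporating the extra unit action by $\mathcal{O}_K^{\times} = \{\pm 1, \pm i\}$ which has no analogue in the classical case. Once that representation is pinned down, the vanishing should follow either from a short dimension count (the space of weight-$0$ invariants of the representation is $0$) or directly from the incompatible root-of-unity constraints; I would present whichever is shorter. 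I expect the argument ultimately reduces to the observation already recorded in the excerpt, namely that the $\epsilon$-transformation kills all $\chi_{\alpha,\beta}$ unless a $4$-divisibility (or the appropriate shifted) congruence on $k$ holds, which fails for $k=1$ across all the relevant sub-series $\phi_n, \phi^n$ simultaneously, leaving only $\phi \equiv 0$.
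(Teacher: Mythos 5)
Your plan takes a genuinely different route from the paper, and it has a gap at the decisive step. The paper does not use the theta decomposition at all: it reduces the statement to Skoruppa's theorem that the classical space $J_{1,m}$ vanishes, by composing the index-raising maps $U_{\rho}\phi(\tau,z_1,z_2)=\phi(\tau,\rho z_1,\bar{\rho}z_2)$ with the diagonal restriction $z_1=z_2=z$ to land in $J_{1,N(\rho)m}=0$, and then recovering the vanishing of every Taylor coefficient $\chi_{\alpha,\beta}$ from a Vandermonde system in the ratios $\rho/\bar{\rho}$. Your observation that in the Hermitian setting the theta components have integral weight $k-1=0$ (rather than the half-integral weight of the classical case), hence are constants, is a real and attractive feature of your approach; but everything then hinges on showing that the space of constant vectors invariant under the representation $\overline{U_m}$ (together with the unit action) is zero, and this is precisely what the proposal does not establish.

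Concretely, the two mechanisms you offer do not suffice. The $T$-constraint $h_s(\tau+1)=e(-N(s)/m)\,h_s(\tau)$ kills a constant $h_s$ only when $N(s)\not\equiv 0\pmod{m}$; the unit action gives $h_s=\bar{\epsilon}\,h_{\bar{\epsilon}s}$, which forces $h_s=0$ only at the two fixed points of multiplication by $i$ on $\mathcal{O}_K^{\sharp}/m\mathcal{O}_K$ (namely $s=0$ and $s=\tfrac{m(1+i)}{2}$) and otherwise merely identifies the constants along an orbit up to a phase. Already for $m=5$ and $s=2+i$ one has $N(s)=5\equiv 0\pmod 5$ and $is\not\equiv s$, so neither constraint applies, and you are forced to use the $S$-transformation, i.e., an actual Gauss-sum/Weil-representation computation that the proposal only gestures at. Your fallback argument is moreover false: at $k=1$ the unit relation reads $\chi_{\alpha,\beta}=\epsilon^{\alpha-\beta-1}\chi_{\alpha,\beta}$, which annihilates $\chi_{\alpha,\beta}$ only when $\alpha-\beta\not\equiv 1\pmod 4$, so the sub-series $\phi_n$ with $n\equiv 1\pmod 4$ (in the notation of Remark~\ref{otherclasses}) survives the unit action and is not ruled out by it. The ``theta-contraction onto a nonzero cusp form in $M_0$'' alternative is too vague to assess. (A minor slip: $[\mathcal{O}_K^{\sharp}:m\mathcal{O}_K]=4m^2$, not $m^2$.) The paper's reduction to the classical weight-one vanishing sidesteps all of this representation-theoretic bookkeeping.
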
 

\begin{proof}
The proof is a simple application of the corresponding result for classical Jacobi forms, proved by N.P. Skoruppa(\cite{skoruppa}). Let $\phi \in J_{1,m}(\mc{O}_{K})$. \[ \mbox{Define} \q   U_{\rho} \colon J_{1,m} ( \mc{O}_{K})  \rightarrow J_{1,N(\rho)m}( \mc{O}_{K}),  \q 
\left(U_{\rho}\phi \right)(\tau,z_{1},z_{2})= \phi(\tau,\rho z_{1},\bar{\rho}z_{2}) \q (\rho \in \mc{O}_{K})\]
and $ \pi \colon J_{1,m}(\mc{O}_{K}) \rightarrow J_{1,m}, \q \left( \pi\phi \right) \left( \tau,z \right) = \phi(\tau,z,z)$. 
Since $J_{1,m} =0$, we have $\left( \pi \circ U_{\rho}\right) \phi = 0$ for each $\rho \in \mc{O}_{K}$. Hence we get from the power series expansion of $\phi$,
\[  \sum \chi_{\alpha,\beta} z_{1}^{\alpha}z_{2}^{\beta} \overset{U_{\rho}}{\longmapsto} \sum \chi_{\alpha,\beta} \rho^{\alpha} \bar{\rho}^{\beta} z_{1}^{\alpha}z_{2}^{\beta} \overset{\pi}{\longmapsto} \sum \chi_{\alpha,\beta} \rho^{\alpha} \bar{\rho}^{\beta} z^{\alpha+\beta}=0 \]

This clearly implies $\chi_{0,0} \equiv 0$. For each $n \geq 1$ we get the following equation
\begin{equation}\label{chi0}
\sum_{\alpha=0}^{n} \left( \frac{\rho}{\bar{\rho}} \right)^{\alpha}  \chi_{\alpha,n-\alpha}=0
\end{equation}

We choose $ \left\{ \rho_{0},\rho_{1}, \cdots \rho_{n} \right\} \in \mc{O}_{K}$ such that each $\rho_{i} \in \mc{O}_{K}\backslash \m{Z}$  and for each pair $(i,j) \in \left\{ 0,1, \cdots n \right\} \, \mbox{ with } i \neq j, \q \rho_{i} \bar{\rho}_{j} \neq \rho_{j} \bar{\rho}_{i}$; i.e., $\rho_{i} \bar{\rho}_{j} \in \mc{O}_{K}\backslash \m{Z}$. (For two sets $A,B$ we have used the notation $A \backslash B := \left\{ x \in A \mid x \not \in B \right\}$.)

We get a system of equations for each $n \geq 1$,
\[ M \cdot \Xi_{n} = 0, \mbox{ where } M_{\gamma,\alpha} =  \left( \frac{\rho_{\gamma}}{\bar{\rho}_{\gamma}} \right)^{\alpha}
\mbox{ and } \Xi_{n} = ( \chi_{\alpha,n-\alpha})_{0 \leq \alpha \leq n} \]
where $M= V \left(  \frac{\rho_{0}}{\bar{\rho}_{0}} , \cdots  \frac{\rho_{n}}{\bar{\rho}_{n}}  \right)$, is the Vandermonde determinant which is non-zero with our choice of $\rho_{i}$'s. Hence $\Xi_{n} \equiv 0$. Since this happens for every $n$, we conclude that $\chi_{\alpha,\beta} \equiv 0$ for all $\alpha,\beta$ and so $\phi \equiv 0$.
\end{proof}

\subsection{Fourier expansion of the adjoint of $ D_{\nu}$}

Let $f \in S_{k + 2 \nu} $ and $\left(  , \right)$ be the Petersson inner product on $S_{k + 2 \nu}$. Let $\langle  ,  \rangle $ be the Petersson inner product on  $J_{k,m}^{cusp} ( \mc{O}_{K}) $ and  $D_{\nu}^{*}: S_{k + 2 \nu} \longrightarrow  J_{k,m}^{cusp} ( \mc{O}_{K}) $ be the adjoint of $D_{\nu}$ with respect to the above inner products.

\begin{theorem}\label{adjoint}
With the above notations the Fourier development of $D_{\nu}^{*} f$ is given by 
\begin{eqnarray}
D_{\nu}^{*} f \left( \tau,z_{1},z_{2} \right) = \sum_{n = 0}^{\infty} \underset{\underset{nm \geq N(r)}{r \in \mc{O}_{K}^{\sharp}}}{\sum} c_{D_{\nu}^{*} f}(n , r) e^{ 2 \pi i  \left( n \tau + r z_{1} + \bar{r} z_{2} \right)}
\q \mbox{ where } , \nonumber \\ 
\begin{split}
c_{D_{\nu}^{*} f}(n , r) =& \frac{ \nu ! (-1)^{\nu} ( 4 \pi )^{2 \nu -1} \Gamma(k + 2 \nu  -1)m^{ \nu -k +3} \left( nm - N(r) \right)^{k-2}}{ \Gamma(k -2) (k -1)^{\left( \nu \right)}} \label{adjointcoeff} \\
& \times \sum_{\lambda \in \mathfrak{O}_{K}} \frac{ a \left( mN(\lambda) + r \lambda + \bar{r} \bar{\lambda} + n  , f \right)}{ \left(mN(\lambda) + r \lambda + \bar{r} \bar{\lambda} + n \right)^{k + \nu -1}}\\
& \times \sum_{j = 0}^{\nu} \frac{ (-1)^{j} (k-1)^{\left( 2 \nu - j \right) }}{\left( \nu - j \right) !^{2} j !} \left( \frac{N(m \lambda + \bar{r})}{m \left( m N(\lambda) + r \lambda + \bar{r} \bar{\lambda} + n \right)}   \right)^{\nu - j} 
\end{split}
\end{eqnarray}
where \q $f(\tau) = \sum_{n = 1}^{\infty} a \left( n, f \right) e^{2 \pi i n \tau}$. 
\end{theorem}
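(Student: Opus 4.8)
The plan is to compute $D_\nu^* f$ by the standard Poincaré-series method: one writes down the Hermitian Jacobi Poincaré series $P_{k,m;(n_0,r_0)}$ of exponential type attached to an index $(n_0,r_0)$ with $n_0 m > N(r_0)$, which spans $J_{k,m}^{cusp}(\mc{O}_K)$ and satisfies $\langle \phi, P_{k,m;(n_0,r_0)}\rangle = \lambda_{k,m,n_0,r_0}\, c_\phi(n_0,r_0)$ for an explicit constant $\lambda$ (a product of $\Gamma$-factors and powers of $\pi$, $m$, $n_0m-N(r_0)$, obtainable by the same unfolding computation as in Haverkamp's work or the classical Eichler--Zagier computation adapted to $\mc{O}_K$). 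Then by adjointness $c_{D_\nu^* f}(n_0,r_0) = \lambda^{-1}\langle D_\nu^* f, P_{k,m;(n_0,r_0)}\rangle = \lambda^{-1}\,\overline{(f, D_\nu P_{k,m;(n_0,r_0)})}$, so the whole problem reduces to (a) computing $D_\nu$ applied to the Poincaré series and (b) computing the Petersson product of the resulting weight-$(k+2\nu)$ cusp form with $f$, i.e. reading off a Fourier coefficient of $D_\nu P$ via the Petersson scalar product against $f$.

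First I would apply Corollary~\ref{modular}, which gives $D_\nu\phi$ as an explicit linear combination $\nu!\sum_{\mu=0}^\nu (-4)^{\nu-\mu}(8\pi i m)^\mu \frac{(k+2\nu-\mu-2)!}{\mu!(k+\nu-2)!}\chi^{(\mu)}_{\nu-\mu,\nu-\mu}(\tau)$ of $\tau$-derivatives of the Taylor coefficients $\chi_{\nu-\mu,\nu-\mu}$ of $\phi$ around $z_1=z_2=0$. So I need the Taylor development of the Poincaré series $P_{k,m;(n_0,r_0)}$ around $z_1=z_2=0$: from the Fourier expansion, $\chi_{a,a}(\tau)$ is a sum over $r$ with a combinatorial weight $r^a\bar r^a/(a!)^2$ (up to normalization of $\mc{O}_K^\sharp$) times $e^{2\pi i n\tau}$, and after the $\mu$-fold $\tau$-derivative one picks up $(2\pi i n)^\mu$. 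Substituting the Fourier expansion of $P$ and using that its $(n,r)$-coefficient is itself a sum over $SL(2,\mathbb Z)$-translates (a Kloosterman-type sum, but for the purpose of extracting $c_{D_\nu^* f}(n,r)$ one only needs the leading — unfolded — term, the rest integrating to $0$ against $f$), the inner $\mu$-sum collapses; reorganizing it as a sum over $j$ after a change of summation variable $\mu \leftrightarrow \nu-j$ should produce exactly the displayed triple structure: the prefactor with $\Gamma(k+2\nu-1)/\Gamma(k-2)$ and $(k-1)^{(\nu)}$ in the denominator, the sum over $\lambda\in\mc{O}_K$ of $a(mN(\lambda)+r\lambda+\bar r\bar\lambda+n,f)$ divided by $(mN(\lambda)+\cdots+n)^{k+\nu-1}$, and the inner $j$-sum with the $\big(N(m\lambda+\bar r)/(m(mN(\lambda)+\cdots+n))\big)^{\nu-j}$ factor. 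The $\lambda$-sum arises because the weight-$(k+2\nu)$ Fourier coefficient of $D_\nu P$ that gets paired with $f$ unfolds into a sum over the lattice $\mc{O}_K$ coming from the index-raising; equivalently one uses the known relation expressing Fourier coefficients of $D_\nu^* f$ through $a(\cdot,f)$ along the "hyperbola" $mN(\lambda)+r\lambda+\bar r\bar\lambda+n$.

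The main obstacle, and where the real work lies, is the exact bookkeeping of constants: tracking the normalization of the dual lattice $\mc{O}_K^\sharp = \frac{i}{2}\mc{O}_K$ through the theta decomposition, getting the power $m^{\nu-k+3}$ and the sign $(-1)^\nu$ right, and — most delicately — verifying that the double sum over $\mu$ (from $D_\nu$, Corollary~\ref{modular}) and the binomial/Gamma factors from the Petersson unfolding integral (a beta-integral $\int_0^\infty$ against the Whittaker-type factor $e^{-4\pi v(\cdots)} v^{k+2\nu-2}\,dv$, together with the $z$-integral over $\mathbb C^2/(\mc{O}_K\tau+\mc{O}_K)$ producing the $\lambda$-sum) combine into the single clean $j$-sum with coefficient $(-1)^j (k-1)^{(2\nu-j)}/((\nu-j)!^2 j!)$. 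I expect this to be a careful but ultimately mechanical induction/identity-manipulation, modeled line-by-line on Eichler--Zagier's computation of $D_\nu^*$ for classical Jacobi forms, with the field $\mathbb Q(i)$ entering only through replacing $\mathbb Z$-sums by $\mc{O}_K$-sums and squares by norms $N(\cdot)$; the convergence of the $\lambda$-sum for $k$ large is automatic from $k+\nu-1$ being large, and the cusp-form property of $D_\nu^* f$ (needed for the Petersson product to make sense) follows from Proposition~\ref{dnu} together with self-adjointness of the construction.
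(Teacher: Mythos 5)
Your proposal follows essentially the same route as the paper: reduce to the Hermitian Jacobi Poincar\'e series via the reproducing property $\langle \phi, P_{n,r}^{k,m}\rangle = \lambda_{n,r}^{k,m}c_\phi(n,r)$ and adjointness, compute $D_\nu P_{n,r}$ from the Taylor expansion of $P_{n,r}$ about $z_1=z_2=0$ (yielding the sum over $\lambda\in\mc{O}_K$ and the elliptic Poincar\'e series $P_T^{k+2\nu}$), and finish with $(f,P_T^{k+2\nu})=a(T,f)\Gamma(k+2\nu-1)/(4\pi T)^{k+2\nu-1}$; the paper itself defers the combinatorial bookkeeping to Tokuno's computation, exactly the part you flag as mechanical. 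This is the intended argument.
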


For the proof we make use of the Hermitian Jacobi Poincar\'{e} series. Let $n \in \m{Z} , r \in \mc{O}_{K}^{\sharp}$, and let $\Gamma^{J}: = \Gamma^{J}(\mathcal{O}_{K} )$ and $\Gamma_{\infty}^{J}: = \left\{ \left( \left( \begin{matrix} 
1 & n \\
0 & 1
\end{matrix} \right) , (0, \mu)\right) \mid n \in \m{Z}, \mu \in \mathcal{O}_{K}  \right\} \subset \Gamma^{J} $ be the stabilizer group of the function $ e^{n,r} : = e^{2 \pi i \left( n \tau + r z_{1} + \bar{r} z_{2} \right)}$ under the action of $\Gamma^{J}$. Let $ P_{n,r}^{k,m}  \left(   n \in \m{Z} , r \in \mc{O}_{K}^{\sharp}  \right)$  be the $(n,r)$-{th} Hermitian Jacobi Poincar\'{e} series of weight $k > 4$ and index $m$ defined by 
\begin{equation}
P_{n,r}^{k,m}(\tau, z_{1}, z_{2}) = \underset{ \gamma \in  \Gamma^{J}_{\infty}\backslash \Gamma^{J}}\sum e \left( n \tau + r z_{1} + \bar{r} z_{2}  \right) \mid_{k,m} \gamma (\tau, z_{1}, z_{2}) 
\end{equation}

Like in the case of classical Jacobi forms, we have :

\begin{lemma}\label{poincare}
\begin{equation*}
\langle \phi ,  P_{n,r}^{k,m} \rangle = \lambda_{n,r}^{k,m} c_{\phi}(n,r)\q  \forall \, \phi \in J_{k,m} ( \mc{O}_{K}) 
\end{equation*}
where $ \lambda_{n,r}^{k,m} =  \frac{m^{k-4} \q \Gamma(k-2)}{\left( 4 \pi \right)^{k-3} \left( mn - N(r) \right)^{k-3}} $.

\begin{proof}
Let $dV^{J} = v^{-4} du dv dx_{1} dy_{1} dx_{2} dy_{2}$ be the invariant volume element on $  \mathcal{H} \times \m{C}^{2}$ for $ \Gamma^{J}.$ We have , by the usual un-folding argument,
\begin{equation*}
 \langle \phi ,  P_{n,r}^{k,m} \rangle = \underset{\Gamma_{\infty}^{J} \backslash \mathcal{H} \times \m{C}^{2}} \int \phi(\tau,z_{1},z_{2}) \overline{e^{n,r}( \tau,z_{1},z_{2})} \q e^{\frac{- \pi m}{v} |z_{1} - \bar{z}_{2} |^{2}} v^{k} dV^{J},
\end{equation*}  
where $\tau = u + i v$ , $z_{i} = x_{j} + i y_{j}$ , $j = 1,2.$ As a fundamental domain for the action of $\Gamma_{\infty}^{J}$ on $\mathcal{H} \times \m{C}^{2}$ we take \[ \Gamma_{\infty}^{J} \backslash \mathcal{H} \times \m{C}^{2} = \left\{0 \leq u \leq 1, 0<v , 0 \leq x_{1} \leq 1, 0 \leq y_{1} \leq 1 \right\} \] We make the substitution $\bar{z}_{2} - z_{1} = z^{\prime}.$ Noting that $\mathcal{O}_{K}^{\sharp} = \frac{i}{2} \mathcal{O}_{K},$
\begin{equation*}
\begin{split}
\langle \phi ,  P_{n,r}^{k,m} \rangle & = \underset{nm > N\left( s \right)}{\sum_{l \geq 1} \sum_{s \in \mc{O}^{\sharp}_{K}}} c_{\phi}\left( l,s \right) \underset{\Gamma_{\infty}^{J} \backslash \mathfrak{H} \times \m{C}^{2}}\int e^{-2 \pi v(l + n)} e^{2 \pi i (l-n)u} e^{ 4 \pi i \textit{Re}\left( (s-r)z_{1} \right) + \bar{s} {\bar{z}^{\prime}} - r z^{\prime} } e^{ \frac{ - \pi m}{v} | z^{\prime} |^{2}} v^{k} d{V^{\prime}}^{J} \\
& = \frac{c_{\phi}\left( n,r \right)}{m} \int_{0}^{\infty} v^{k-3}e^{- \frac{4 \pi v }{m} \left( mn  -  N(r)\right)  }  dv \\
& = c_{\phi}\left( n,r \right) \frac{m^{k-4} \q \Gamma(k-2)}{\left( 4 \pi \right)^{k-3} \left( mn - N(r) \right)^{k-3}} .
\end{split} 
\end{equation*}
\end{proof}
\end{lemma}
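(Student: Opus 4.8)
The plan is to carry out the standard Poincar\'e-series unfolding, adapted to the Hermitian Jacobi setting where the second complex variable contributes a Gaussian (theta-type) integral absent in the purely elliptic case. I would fix the Petersson inner product on $J_{k,m}^{cusp}(\mc{O}_{K})$ as the integral of $\phi(\tau,z_{1},z_{2})\,\overline{\psi(\tau,z_{1},z_{2})}$ against the weight $v^{k}e^{-\pi m|z_{1}-\bar{z}_{2}|^{2}/v}$ with respect to $dV^{J}=v^{-4}\,du\,dv\,dx_{1}\,dy_{1}\,dx_{2}\,dy_{2}$, over a fundamental domain for $\Gamma^{J}\backslash(\mathcal{H}\times\m{C}^{2})$; this weight is the Hermitian analogue of the classical factor $e^{-4\pi m y^{2}/v}$, to which it specializes on the diagonal $z_{1}=z_{2}$. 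The step I would verify at the outset is that the product $\phi\,\bar{\psi}\,v^{k}e^{-\pi m|z_{1}-\bar{z}_{2}|^{2}/v}$ is genuinely $\Gamma^{J}$-invariant under $|_{k,m}$, since this invariance is exactly what permits the unfolding.

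Taking $\psi=P_{n,r}^{k,m}=\sum_{\gamma\in\Gamma_{\infty}^{J}\backslash\Gamma^{J}}e^{n,r}|_{k,m}\gamma$, I would appeal to absolute convergence for $k>4$ to interchange sum and integral, and then use the invariance of the integrand to collapse the coset sum: the integral over $\Gamma^{J}\backslash(\mathcal{H}\times\m{C}^{2})$ of $\phi\,\overline{P_{n,r}^{k,m}}$ against the weight becomes the integral of $\phi\,\overline{e^{n,r}}$ against the same weight over the larger region $\Gamma_{\infty}^{J}\backslash(\mathcal{H}\times\m{C}^{2})$. Since $\Gamma_{\infty}^{J}$ shifts $u$ by $\m{Z}$ and $(z_{1},z_{2})$ by $(\mu,\bar{\mu})$ for $\mu\in\mc{O}_{K}$, I would take the fundamental domain $\{0\le u\le1,\,v>0,\,0\le x_{1}\le1,\,0\le y_{1}\le1\}$ with $z_{2}$ free over $\m{C}$, then substitute $z'=\bar{z}_{2}-z_{1}$ (Jacobian $1$), which turns the weight into $e^{-\pi m|z'|^{2}/v}$ with $z'$ ranging over $\m{C}$.

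Inserting the Fourier expansion $\phi=\sum_{l,s}c_{\phi}(l,s)\,e^{2\pi i(l\tau+sz_{1}+\bar{s}z_{2})}$, the remaining integrations are elementary: the $u$-integral over $[0,1]$ forces $l=n$, and the $x_{1},y_{1}$-integrals over the unit cell force $s=r$ by orthogonality of additive characters on $\m{C}/\mc{O}_{K}$, so only the single coefficient $c_{\phi}(n,r)$ survives. The surviving two-dimensional Gaussian integral in $z'$ evaluates in closed form, contributing a factor proportional to $v/m$ and reorganizing the exponential into $e^{-4\pi v(mn-N(r))/m}$; the total power of $v$ is then $v^{k}\cdot v^{-4}\cdot v=v^{k-3}$, and the final $v$-integral $\int_{0}^{\infty}v^{k-3}e^{-4\pi v(mn-N(r))/m}\,dv$ is a Gamma integral producing $\Gamma(k-2)$ together with the stated power of $mn-N(r)$. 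Collecting the constants yields $\langle\phi,P_{n,r}^{k,m}\rangle=\lambda_{n,r}^{k,m}\,c_{\phi}(n,r)$.

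The constant-bookkeeping in the Gaussian and Gamma integrals is routine. The two places I expect genuine care are (i) fixing the correct invariant weight and checking its $\Gamma^{J}$-invariance, since the unfolding is only valid once this is in place, and (ii) the non-compact $z_{2}$-direction: one must confirm that the fundamental domain is correctly described and that the Gaussian integral over the free variable $z'$ converges and combines cleanly with the exponential decay in $v$. This coupling between the theta-type variable and the modular variable, absent in the classical elliptic case, is the main obstacle and is what distinguishes the computation from the ordinary Poincar\'e-series identity.
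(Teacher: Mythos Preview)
Your proposal is correct and follows essentially the same path as the paper: unfold the Poincar\'e series to $\Gamma_{\infty}^{J}\backslash(\mathcal{H}\times\m{C}^{2})$ with the weight $v^{k}e^{-\pi m|z_{1}-\bar{z}_{2}|^{2}/v}\,dV^{J}$, use the fundamental domain $\{0\le u\le 1,\,v>0,\,0\le x_{1},y_{1}\le 1\}$, substitute $z'=\bar{z}_{2}-z_{1}$, extract $c_{\phi}(n,r)$ by orthogonality in $u,x_{1},y_{1}$, then evaluate the Gaussian in $z'$ and the Gamma integral in $v$. The paper's proof is exactly this computation, presented more tersely and without the explicit invariance check you flag in (i).
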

 
\begin{lemma}\label{derivative}
\begin{equation*}
\frac{\partial^{\alpha}}{\partial {z_{1}}^{\alpha}} \frac{\partial^{\alpha}}{\partial {z_{2}}^{\alpha}} \left( exp(az_{1} + bz_{2} + cz_{1}z_{2}) \right) |_{z_{1} = z_{2} =0} \\
 = \sum_{h = 0}^{\alpha} (ab)^{h} c^{\alpha - h} {\binom{\alpha}{h}}^{2} (\alpha - h)!\\
\end{equation*}

\begin{proof}
\begin{equation*}
\begin{split}
& \frac{\partial^{\alpha}}{\partial {z_{1}}^{\alpha}} \frac{\partial^{\alpha}}{\partial {z_{2}}^{\alpha}} \left( exp(az_{1} + bz_{2} + cz_{1}z_{2}) \right)  =  \frac{\partial^{\alpha}}{\partial {z_{1}}^{\alpha}} exp(az_{1})  \frac{\partial^{\alpha}}{\partial {z_{2}}^{\alpha}} \left( exp(b + cz_{1})z_{2} \right) \\
& = \frac{\partial^{\alpha}}{\partial {z_{1}}^{\alpha}} exp(az_{1})\left\{ (b+cz_{1})^{\alpha} exp(bz_{2}+cz_{1}z_{2}) \right\} = exp(bz_{2}) \frac{\partial^{\alpha}}{\partial {z_{1}}^{\alpha}} \left\{ (b+cz_{1})^{\alpha} exp\left( (a+cz_{2})z_{1}\right) \right\}\\
& = exp(bz_{2}) \sum_{h=0}^{\alpha} \binom{\alpha}{h} \frac{\alpha !}{(\alpha -h) !}c^{h} (b+cz_{1})^{\alpha -h} (a+cz_{2})^{\alpha-h} exp(az_{1} + cz_{1}z_{2}),\\
\end{split}
\end{equation*}
from which the lemma easily follows upon changing $h \mapsto \alpha - h$.
\end{proof}
\end{lemma}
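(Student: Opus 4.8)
The plan is to differentiate the exponential one block of variables at a time, exploiting the fact that $\exp$ is an eigenfunction of differentiation, and then to collect terms via the Leibniz (binomial) rule. The entire content is a careful expansion, so the proposal amounts to organizing the bookkeeping.

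First I would factor the integrand as $\exp(az_1 + bz_2 + cz_1z_2) = \exp(az_1)\,\exp\bigl((b+cz_1)z_2\bigr)$, treating $b+cz_1$ as a constant in $z_2$. Applying $\partial^{\alpha}/\partial z_2^{\alpha}$ then simply multiplies by $(b+cz_1)^{\alpha}$, since each $z_2$-differentiation pulls down one factor $(b+cz_1)$. This reduces the computation to evaluating $\partial^{\alpha}/\partial z_1^{\alpha}$ applied to $\exp(az_1)\,(b+cz_1)^{\alpha}\,\exp\bigl((b+cz_1)z_2\bigr)$.

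Next I would regroup the exponentials so that all $z_1$-dependence sits in a single exponential, rewriting the expression as $\exp(bz_2)\,(b+cz_1)^{\alpha}\,\exp\bigl((a+cz_2)z_1\bigr)$, where $\exp(bz_2)$ is now constant in $z_1$. Then I would apply the Leibniz rule to the $\alpha$-fold $z_1$-derivative of the product $(b+cz_1)^{\alpha}\exp\bigl((a+cz_2)z_1\bigr)$: the $h$-th derivative of $(b+cz_1)^{\alpha}$ is $\frac{\alpha!}{(\alpha-h)!}c^{h}(b+cz_1)^{\alpha-h}$, and the $(\alpha-h)$-th derivative of the exponential returns $(a+cz_2)^{\alpha-h}$ times itself. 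This yields a single sum over $h$ of $\binom{\alpha}{h}\frac{\alpha!}{(\alpha-h)!}c^{h}(b+cz_1)^{\alpha-h}(a+cz_2)^{\alpha-h}$ times the surviving exponentials, which is precisely the penultimate displayed line in the statement's own derivation.

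Finally I would set $z_1=z_2=0$: all exponential factors become $1$, while $(b+cz_1)^{\alpha-h}\mapsto b^{\alpha-h}$ and $(a+cz_2)^{\alpha-h}\mapsto a^{\alpha-h}$, leaving $\sum_{h=0}^{\alpha}\binom{\alpha}{h}\frac{\alpha!}{(\alpha-h)!}c^{h}(ab)^{\alpha-h}$. The remaining step is purely cosmetic: the substitution $h\mapsto\alpha-h$ converts $c^{h}(ab)^{\alpha-h}$ into $(ab)^{h}c^{\alpha-h}$ and transforms the coefficient $\binom{\alpha}{h}\frac{\alpha!}{(\alpha-h)!}=\frac{(\alpha!)^2}{h!\,(\alpha-h)!^{2}}$ into $\frac{(\alpha!)^{2}}{(\alpha-h)!\,h!^{2}}=\binom{\alpha}{h}^{2}(\alpha-h)!$, matching the claimed closed form. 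There is no genuine obstacle here; the only thing to get right is this last index flip, since the symmetric coefficient $\binom{\alpha}{h}^{2}(\alpha-h)!$ only emerges after reindexing.
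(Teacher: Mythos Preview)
Your proposal is correct and follows exactly the same route as the paper: factor out $\exp(az_1)$, differentiate in $z_2$ to produce the $(b+cz_1)^\alpha$ factor, regroup the exponentials, apply Leibniz in $z_1$, evaluate at the origin, and finish with the reindexing $h\mapsto\alpha-h$. There is nothing to add; your bookkeeping of the final coefficient transformation is in fact more explicit than the paper's.
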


\begin{proof}[Proof of Theorem~\ref{adjoint}]

Since the proof is quite similar to that in \cite[Theorem 1.1]{tokuno}, we will only include the results of our computation. From Lemma~(\ref{poincare}) we can compute the $(n,r)$-th Fourier coefficient of $D_{\nu}^{*}f$ as 
\begin{equation} \label{44}
\langle D_{\nu}^{*}f, P_{n,r} \rangle = \frac{c_{D_{\nu}^{*} f}(n , r) m^{k-4} \Gamma(k-2)}{(4 \pi)^{k-2} (nm - N(r))^{k-3}} = \left( f, D_{\nu}(P_{n,r}) \right) 
\end{equation}

Next, we compute $D_{\nu}(P_{n,r})$ as an infinite linear combination of elliptic Poincar\'{e} series (see equation~(\ref{Dpoincare})). We start with the definition of Hermitian Poincar\'{e} series : 
\begin{eqnarray} 
P_{n,r}(\tau,z_{1},z_{2}) = \underset{\lambda \in \mc{O}_{K}}\sum \, \underset{\gamma \in \Gamma_{\infty}\backslash \Gamma}\sum (c \tau + d)^{-k} e \left( - \frac{m c z_{1}z_{2}}{c \tau + d} + (m N(\lambda) + r \lambda + \bar{r} \bar{\lambda}+n)\gamma \tau \right. \nonumber \\
\left. + \frac{(m \bar{\lambda}+r)z_{1}+ (m \lambda+r)z_{2}}{c \tau + d} \right) 
\end{eqnarray}

By Lemma~(\ref{derivative}) with $a= \frac{2 \pi i (m \bar{\lambda}+r)}{c \tau + d}, b = \frac{2 \pi i (m \lambda+\bar{r})}{c \tau + d}, c =  \frac{-2 \pi i m c }{c \tau + d}$

we obtain a formula for $\tilde{\chi}_{\alpha,\alpha}$, where the power series expansion of $P_{n,r}$ is $ \underset{\alpha,\beta \geq 0}\sum \tilde{\chi}_{\alpha,\beta} \, z_{1}^{\alpha}z_{2}^{\beta}$. 
\begin{eqnarray}\label{45}
\begin{split}
\tilde{\chi}_{\alpha,\alpha} &= \sum_{h=0}^{\alpha} \frac{(2 \pi i )^{\alpha + h}}{(h !)^{2} (\alpha -h)!} \times \\
& \times \underset{\lambda \in \mc{O}_{K}}\sum \, \underset{\gamma \in \Gamma_{\infty}\backslash \Gamma}\sum (- m c)^{\alpha - h}(c \tau + d)^{-(k+h+\alpha)} N(m \lambda+\bar{r})^{h} e \left( m N(\lambda) + r \lambda + \bar{r} \bar{\lambda}+n  \right)
\end{split}
\end{eqnarray}

For convenience of notation, we let $T := m N(\lambda) + r \lambda + \bar{r} \bar{\lambda}+n $. 

Put $\tilde{P}(\tau)= \underset{\gamma \in \Gamma_{\infty}\backslash \Gamma}\sum  (- m c)^{\alpha - h}(c \tau + d)^{-(k+h+\alpha)} e(T)$. Then we have the following formula (\cite[p.30]{tokuno})
\begin{eqnarray} \label{46}
\begin{split}
\tilde{P}^{(\mu)}(\tau) & = \sum_{j=0}^{\mu} \binom{\mu}{j} (-1)^{\mu -j} (2 \pi i)^{j} (k+h+\alpha + j)^{(\mu - j)} \times \\
& \times \underset{\gamma \in \Gamma_{\infty}\backslash \Gamma}\sum (c \tau + d)^{-(k+h+\alpha + \mu + j)} c^{\mu -j} (- m c)^{\alpha - h} T^{j} e(T).
\end{split}
\end{eqnarray}
where for non-negative integers $a,b$ , $ a^{(b)} := \frac{(a+b-1)!}{(b-1)!} $.

From equation~(\ref{45}) and~(\ref{46}) we get the following expression for $\tilde{\chi}_{\alpha,\alpha} $ :
\begin{eqnarray}\label{47}
\begin{split}
\tilde{\chi}_{\alpha,\alpha} & = \sum_{h=0}^{\alpha} \frac{(2 \pi i )^{\alpha + h}}{(h !)^{2} (\alpha -h)!} \underset{\lambda \in \mc{O}_{K}}\sum N(m \lambda+\bar{r})^{h} \sum_{j=0}^{\mu} \binom{\mu}{j} (-1)^{\mu -j} (2 \pi i)^{j} \\
& \times (k+h+\alpha + j)^{(\mu - j)} \underset{\gamma \in \Gamma_{\infty}\backslash \Gamma}\sum  (c \tau + d)^{-(k+h+\alpha + \mu + j)} \, c^{\mu -j} (- m c)^{\alpha - h} \, T^{j} e(T)
\end{split}
\end{eqnarray}

Finally taking $\alpha = \nu  - \mu$ in equation~(\ref{47}) we arrive at the following expression for $D_{\nu}(P_{n,r})(\tau)$:
\begin{eqnarray}
\begin{split}
D_{\nu}(P_{n,r})(\tau) & = \underset{\lambda \in \mc{O}_{K}}\sum \, \underset{\gamma \in \Gamma_{\infty}\backslash \Gamma}\sum \sum_{\mu=0}^{\nu} \sum_{h=0}^{\nu - \mu} \sum_{j=0}^{\mu} (-1)^{\mu + h+j} \binom{\mu}{j}  (2 \pi i)^{\nu + h+j}  \frac{4^{\nu} \nu!}{\mu! (h !)^{2} (\nu - \mu -h)!} \\
& \times (k+h+\nu - \mu + j)^{(\mu - j)} \, \frac{(k+2 \nu - \mu -2)!}{(k+ \nu -2)!} m^{\nu -h} N(m \lambda+\bar{r})^{h}\\ 
& \times (c \tau + d)^{-(k+h+ \nu + j)} \,  c^{ \nu - h -j} \, T^{j} e(T)   
\end{split}
\end{eqnarray}

Using the identities as in \cite[p.31]{tokuno} we get the expansion of $D_{\nu}(P_{n,r})(\tau)$ in terms of the Poincar\'{e} series:
\begin{eqnarray}\label{Dpoincare}
\sum_{j=0}^{\nu} \frac{(-1)^{j} (k+ 2 \nu -j -2)! \, \nu ! \, (4 \pi)^{2 \nu}}{j! (\nu -j)!^{2} (k+\nu -2)!} \underset{\lambda \in \mc{O}_{K}}\sum N(m \lambda+\bar{r})^{\nu -j} (mT)^{j} \, P_{T}^{k+ 2 \nu}(\tau)
\end{eqnarray}
where $P_{T}^{k+ 2 \nu}$ denotes the $T$-th Poincar\'{e} series of weight $k+2 \nu$ for $SL(2,\m{Z})$ 

Using Lemma~(\ref{poincare}) and equation~(\ref{44}) and the fact that $\left( f, P_{n}^{k} \right)= \frac{a(n,f) \Gamma(k-1)}{(4 \pi n)^{k-1}}$ for \\ $f = \sum_{n=0}^{\infty} a(n,f) q^{n} \in M_{k}$ and $P_{n}^{k}$ the $n$-th Poincar\'{e} series of weight $k$, we get the desired formula~(\ref{adjointcoeff}) in Theorem~\ref{adjoint}. 
\end{proof} 

\subsection{Construction of Hermitian Jacobi forms using classical Jacobi Forms} In this section we define a map from $2$ copies of Jacobi forms to Jacobi forms for the group $SL(2, \m{Z}) \ltimes \mc{O}_{K}$, denoted by $J_{k,m}^{1}(\mc{O}_{K})$ (see \cite{raghavan}); the transformation properties for these Jacobi forms being the same as in~(\ref{jacobi1}) and~(\ref{jacobi2}) except that we take $\epsilon = 1$. Obviously $J_{k,m}(\mc{O}_{K}) \subset J_{k,m}^{1}(\mc{O}_{K})$. We then average over the units in $\mc{O}_{K}$ to get Hermitian Jacobi forms.
\begin{proposition}
Fix $k_{1}, k_{2} \in \m{N}$. Let $\phi_{j} \in J_{k_{j},m}, \, j \in \{1,2\}.$ Define \[ H \left(\phi_{1},\phi_{2}\right)(\tau,z_{1},z_{2}) = \underset{\epsilon \in \mc{O}_{K}^{\times}}\sum \phi_{1} \left(\tau, \frac{1}{2}(z_{1} + z_{2}) \right) \phi_{2} \left(\tau, \frac{i}{2}(z_{1} - z_{2}) \right) \mid_{k_{1}+k_{2}} \epsilon I.\]  
Then $H \left(\phi_{1},\phi_{2}\right) \in J_{k_{1}+k_{2},m}^{1}(\mc{O}_{K} )$. If $\phi_{i} $ are cusp forms, then so is $H \left(\phi_{1},\phi_{2}\right).$  

\begin{proof} The proof is easy, so we omit it. The assertion about cusp forms is easily checked by writing the Fourier expansion of $H \left(\phi_{1},\phi_{2}\right)$ from those of $\phi_{1}$ and $\phi_{2}$. 
\end{proof}
\end{proposition}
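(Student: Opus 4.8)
The plan is to reduce the whole statement to the single assertion that
$\psi(\tau,z_{1},z_{2}) := \phi_{1}\!\left(\tau,\tfrac12(z_{1}+z_{2})\right)\phi_{2}\!\left(\tau,\tfrac{i}{2}(z_{1}-z_{2})\right)$
already lies in $J_{k_{1}+k_{2},m}^{1}(\mc{O}_{K})$. Granting this, $H(\phi_{1},\phi_{2})=\sum_{\epsilon\in\mc{O}_{K}^{\times}}\psi\mid_{k_{1}+k_{2}}\epsilon I$ lies there too, since $\epsilon I$ centralizes $SL(2,\m{Z})$ and merely rescales the translation lattice $\mc{O}_{K}^{2}$, so the slash by $\epsilon I$ preserves $J_{k_{1}+k_{2},m}^{1}(\mc{O}_{K})$; moreover, the slash being a right action and $\{\epsilon I\}\cong\mc{O}_{K}^{\times}$ a finite group, the averaged function is in addition invariant under every $\epsilon I$. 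Everything hinges on the substitution $w_{1}=\tfrac12(z_{1}+z_{2})$, $w_{2}=\tfrac{i}{2}(z_{1}-z_{2})$, for which I would record at the outset the two identities $w_{1}^{2}+w_{2}^{2}=z_{1}z_{2}$ and, for $\lambda=a+bi$ and $\mu=c+di$ in $\mc{O}_{K}$, the fact that the Hermitian shift $(z_{1},z_{2})\mapsto(z_{1}+\lambda\tau+\mu,\,z_{2}+\bar\lambda\tau+\bar\mu)$ sends $w_{1}$ to $w_{1}+a\tau+c$ and $w_{2}$ to $w_{2}-b\tau-d$, both translations by points of $\m{Z}\tau+\m{Z}$.

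First I would check the $SL(2,\m{Z})$-transformation~(\ref{jacobi1}) (with $\epsilon=1$) for $\psi$: replacing $z_{j}$ by $z_{j}/(c\tau+d)$ replaces each $w_{i}$ by $w_{i}/(c\tau+d)$, so the classical laws $\phi_{j}\mid_{k_{j},m}M=\phi_{j}$ yield $\psi\!\left(M\tau,\tfrac{z_{1}}{c\tau+d},\tfrac{z_{2}}{c\tau+d}\right)=(c\tau+d)^{k_{1}+k_{2}}\,e\!\left(mc(w_{1}^{2}+w_{2}^{2})/(c\tau+d)\right)\psi(\tau,z_{1},z_{2})$; by the identity $w_{1}^{2}+w_{2}^{2}=z_{1}z_{2}$ the exponential here is precisely $e\!\left(mcz_{1}z_{2}/(c\tau+d)\right)$, so feeding this into the definition of $\psi\mid_{k_{1}+k_{2},m}M$ gives $\psi$.

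Next I would check the elliptic transformation~(\ref{jacobi2}) for $\psi$. Using the shift formulas, $\psi(\tau,z_{1}+\lambda\tau+\mu,z_{2}+\bar\lambda\tau+\bar\mu)$ equals $\phi_{1}(\tau,w_{1}+a\tau+c)\,\phi_{2}(\tau,w_{2}-b\tau-d)$; applying the classical index-$m$ period transformations of $\phi_{1}$ and $\phi_{2}$ introduces a multiplier whose $\tau$-exponent involves $a^{2}+b^{2}=N(\lambda)$ and whose linear part involves $2aw_{1}-2bw_{2}$, and a direct substitution gives $2aw_{1}-2bw_{2}=\bar\lambda z_{1}+\lambda z_{2}$. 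Thus the multiplier is exactly what cancels the factor $e^{2\pi i m(N(\lambda)\tau+\bar\lambda z_{1}+\lambda z_{2})}$ demanded by~(\ref{jacobi2}), so $\psi\mid_{k_{1}+k_{2},m}[\lambda,\mu]=\psi$. Holomorphy of $\psi$ on $\mc{H}\times\m{C}^{2}$ is clear, and the boundedness needed at the cusp is inherited from the Fourier expansions of $\phi_{1},\phi_{2}$; hence $\psi\in J_{k_{1}+k_{2},m}^{1}(\mc{O}_{K})$, and therefore so is $H(\phi_{1},\phi_{2})$.

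For the cusp-form claim I would insert the Fourier expansions $\phi_{j}(\tau,w)=\sum_{4mn_{j}\ge r_{j}^{2}}c_{j}(n_{j},r_{j})\,e(n_{j}\tau+r_{j}w)$ (with strict inequality throughout when $\phi_{j}$ is a cusp form) into $\psi$: the product term indexed by $(n_{1},r_{1}),(n_{2},r_{2})$ contributes to the Hermitian coefficient at $n=n_{1}+n_{2}$ and $r=\tfrac12(r_{1}+ir_{2})\in\mc{O}_{K}^{\sharp}$, and one reads off $nm-N(r)=\tfrac14(4mn_{1}-r_{1}^{2})+\tfrac14(4mn_{2}-r_{2}^{2})$. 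If both $\phi_{j}$ are cusp forms, each summand on the right is positive, so $c_{\psi}(n,r)=0$ whenever $nm=N(r)$; since slashing by $\epsilon I$ leaves $nm-N(r)$ unchanged, $H(\phi_{1},\phi_{2})$ is then a Hermitian Jacobi cusp form. The only delicate point is bookkeeping: matching the sign and normalization conventions of the classical index-$m$ transformation laws with those of~(\ref{jacobi1})--(\ref{jacobi2}), and confirming that the change of variables really carries the period lattice $\mc{O}_{K}\tau+\mc{O}_{K}$ onto $\m{Z}\tau+\m{Z}$ in each of the new variables $w_{1},w_{2}$ separately; beyond this there is no genuine obstacle, which is why the statement is routine.
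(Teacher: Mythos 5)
Your verification is correct and is exactly the routine computation the paper declines to write out: the key identities $w_{1}^{2}+w_{2}^{2}=z_{1}z_{2}$ and $2aw_{1}-2bw_{2}=\bar\lambda z_{1}+\lambda z_{2}$ (for $\lambda=a+bi$) reduce both transformation laws to the classical ones, and your Fourier-coefficient bookkeeping $nm-N(r)=\tfrac14(4mn_{1}-r_{1}^{2})+\tfrac14(4mn_{2}-r_{2}^{2})$ is precisely the argument the paper gestures at for the cusp-form assertion. No gaps; this matches the paper's (omitted) approach.
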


\begin{corollary}
$H$ is a bilinear map and hence by the above proposition, defines a unique linear map, which we still denote by $H$. Further averaging over the units of $\mc{O}_{K}$, i.e., considering the map $\Lambda \colon J_{k,m}^{1}(\mc{O}_{K}) \rightarrow J_{k,m}(\mc{O}_{K})$ given by $ \phi \mapsto \underset{\epsilon \in \mc{O}_{K}^{\times}}\sum \phi\mid_{k} \epsilon I $ we have the following lift to Hermitian Jacobi forms:
\begin{equation}
J_{k_{1},m} \otimes J_{k_{2},m} \overset{H}\longrightarrow J_{k_{1}+k_{2},m}^{1}(\mc{O}_{K}) \overset{\Lambda}\longrightarrow J_{k_{1}+k_{2},m}(\mc{O}_{K})
\end{equation}
preserving cusp forms.
\end{corollary}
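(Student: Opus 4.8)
The plan is to prove the Corollary in three short steps: (1) $H$ is bilinear, so it descends to a linear map on $J_{k_1,m}\otimes J_{k_2,m}$; (2) $\Lambda$ is a well-defined linear map $J_{k,m}^1(\mc{O}_K)\to J_{k,m}(\mc{O}_K)$; (3) both $H$ and $\Lambda$ send cuspidal data to cuspidal data, so the composite does. Step (1) is immediate: in the defining sum $H(\phi_1,\phi_2)=\sum_{\epsilon\in\mc{O}_K^\times}\phi_1(\tau,\tfrac{1}{2}(z_1+z_2))\,\phi_2(\tau,\tfrac{i}{2}(z_1-z_2))\mid_{k_1+k_2}\epsilon I$, each summand is separately linear in $\phi_1$ and in $\phi_2$, the slash by $\epsilon I$ is linear, and the sum is finite, so $(\phi_1,\phi_2)\mapsto H(\phi_1,\phi_2)$ is bilinear; it takes values in $J_{k_1+k_2,m}^1(\mc{O}_K)$ by the preceding Proposition, so the universal property of the tensor product gives a unique linear map $J_{k_1,m}\otimes J_{k_2,m}\to J_{k_1+k_2,m}^1(\mc{O}_K)$ sending $\phi_1\otimes\phi_2$ to $H(\phi_1,\phi_2)$, which we keep denoting $H$.

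Step (2) is the only one carrying content. Since the matrix $I$ has lower-left entry $0$, one has $\phi\mid_k\epsilon I=\epsilon^{-k}\phi(\tau,\epsilon z_1,\bar\epsilon z_2)$, so for $\phi\in J_{k,m}^1(\mc{O}_K)$ the function $\Lambda\phi=\sum_{\epsilon\in\mc{O}_K^\times}\epsilon^{-k}\phi(\tau,\epsilon z_1,\bar\epsilon z_2)$ is a finite sum of holomorphic functions on $\mc{H}\times\m{C}^2$, and $\Lambda$ is clearly linear. For membership in $J_{k,m}(\mc{O}_K)$ I would argue as follows. The group $\Gamma^1(\mc{O}_K)$ is generated by $SL(2,\m{Z})$ together with the unit matrices $\{\epsilon I:\epsilon\in\mc{O}_K^\times\}$, and, since the slash $\mid_{k,m}$ is a right action of $\Gamma^J(\mc{O}_K)$, an element of $J_{k,m}(\mc{O}_K)$ is exactly a holomorphic $\phi$ invariant under all of $SL(2,\m{Z})$, all the $\epsilon I$, and all the translations $[\lambda,\mu]$ with $\lambda,\mu\in\mc{O}_K$. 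A form in $J_{k,m}^1(\mc{O}_K)$ is already invariant under $SL(2,\m{Z})$ and under all $[\lambda,\mu]$; the point of averaging over $\mc{O}_K^\times$ is precisely to produce invariance under the remaining generators $\epsilon'I$. The single arithmetic input one needs is that every unit satisfies $N(\epsilon)=\epsilon\bar\epsilon=1$: this gives first that slash by $\epsilon I$ carries $J_{k,m}^1(\mc{O}_K)$ into itself (because $z_1z_2$, $N(\lambda)$ and $\bar\lambda z_1+\lambda z_2$ are unchanged by $z_1\mapsto\epsilon z_1,\ z_2\mapsto\bar\epsilon z_2,\ \lambda\mapsto\epsilon\lambda$, using also $\phi\mid_{k,m}M=\phi$ and $\phi\mid_{k,m}[\epsilon\lambda,\epsilon\mu]=\phi$), so $\Lambda\phi$ is again $SL(2,\m{Z})$- and $[\lambda,\mu]$-invariant; and second that $(\phi\mid_k\epsilon I)\mid_k\epsilon'I=\phi\mid_k(\epsilon\epsilon')I$, so that reindexing the finite sum by the bijection $\epsilon\mapsto\epsilon\epsilon'$ of $\mc{O}_K^\times$ yields $(\Lambda\phi)\mid_k\epsilon'I=\Lambda\phi$ for every $\epsilon'\in\mc{O}_K^\times$. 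Hence $\Lambda\phi$ is holomorphic and invariant under a generating set of $\Gamma^J(\mc{O}_K)$, i.e.\ $\Lambda\phi\in J_{k,m}(\mc{O}_K)$; in particular it has a Fourier expansion of the form~(\ref{fourier}).

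For step (3): from $\phi\mid_k\epsilon I=\epsilon^{-k}\phi(\tau,\epsilon z_1,\bar\epsilon z_2)$ the $(n,r)$-th Fourier coefficient of $\phi\mid_k\epsilon I$ is $\epsilon^{-k}c_\phi(n,\epsilon^{-1}r)$, and since $\epsilon\mc{O}_K^{\sharp}=\mc{O}_K^{\sharp}$ and $N(\epsilon^{-1}r)=N(r)$ this keeps the support $n\ge0,\ r\in\mc{O}_K^{\sharp},\ nm\ge N(r)$ and vanishes on $nm=N(r)$ whenever $c_\phi$ does; so $\Lambda$ maps Hermitian Jacobi cusp forms to Hermitian Jacobi cusp forms. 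Together with the cuspidality assertion of the preceding Proposition (that $H(\phi_1,\phi_2)$ is a cusp form when $\phi_1$ and $\phi_2$ are), this shows the composite $\Lambda\circ H\colon J_{k_1,m}\otimes J_{k_2,m}\to J_{k_1+k_2,m}(\mc{O}_K)$ is the asserted lift, carrying products of Jacobi cusp forms into $J_{k_1+k_2,m}^{cusp}(\mc{O}_K)$. I do not expect any genuine obstacle: the only place where a little care is needed is checking that slash by the unit matrices $\epsilon I$ preserves $J_{k,m}^1(\mc{O}_K)$ and composes as stated, which is exactly where one uses $N(\epsilon)=1$, and everything else is formal bookkeeping.
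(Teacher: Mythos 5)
Your proof is correct and is precisely the argument the paper leaves implicit (the corollary is stated without proof): bilinearity plus the universal property of $\otimes$ for $H$, and averaging over $\mc{O}_{K}^{\times}$ — using $N(\epsilon)=1$ so that $\epsilon I$ normalizes the $SL(2,\m{Z})$- and lattice-invariances and the slashes compose as $\phi\mid_{k}(\epsilon\epsilon')I$ — to upgrade $J_{k,m}^{1}(\mc{O}_{K})$-invariance to full $\Gamma^{J}(\mc{O}_{K})$-invariance. Your Fourier-coefficient check $c_{\phi\mid\epsilon I}(n,r)=\epsilon^{-k}c_{\phi}(n,r\epsilon^{-1})$ correctly handles both the support condition $nm\geq N(r)$ and the preservation of cusp forms.
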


\begin{myrem}
If $\phi_{1} = \underset{\mu \pmod{2m}}\sum h_{\mu}\theta_{m,\mu}(\tau,z) \in J_{k_{1},m}$ and $\phi_{2} \underset{\mu \pmod{2m}}\sum g_{\mu}\theta_{m,\mu}(\tau,z) \in J_{k_{2},m}$ be their Theta decompositions (see \cite{zagier}).Then the Theta decomposition (see Section~\ref{coeffs} for definition) of $\phi_{1} \otimes \phi_{2} \in J_{k_{1}+k_{2},m}(\mc{O}_{K})$ is given by (clearly the construction of the Theta decomposition for $J_{k_{1}+k_{2},m}^{1}(\mc{O}_{K})$ is exactly the same as for $J_{k_{1}+k_{2},m}(\mc{O}_{K})$)
\[ \phi_{1} \otimes \phi_{2}(\tau,z_{1},z_{2})= \underset{\epsilon \in \mc{O}_{K}^{\times}}\sum \epsilon^{-k_{1}-k_{2}} \underset{s \in \mathcal{O}_{K}^{\sharp}/m \mathcal{O}_{K}}\sum  h_{\small{\mathrm{Re}(s)}}(\tau) \, g_{\small{\mathrm{Im}(s)}}(\tau)  \cdot \theta^{H}_{m,\epsilon s}(\tau,z_{1},z_{2}) \]  
This follows easily from the fact that $H(\theta_{m,\mu},\theta_{m,\nu}) = \theta^{H}_{m,\frac{\mu}{2} + i \frac{\nu}{2}}$ and that $\theta^{H}_{m,s} \mid_{1,m} \epsilon I = \bar{\epsilon} \theta^{H}_{m, \epsilon s}$. It would be interesting to study this map and to find how large the image is.
\end{myrem}

\subsection{\x{Construction by differentiation}} Finally we construct Hermitian Jacobi forms from smaller weights and indices using differentiation of the variables $z_{1}, z_{2}$. This is the analogue of the corresponding construction for the classical Jacobi forms \cite[Theorem 9.5]{zagier}. For a function $\phi \colon \mc{H} \times \m{C}^{2} \rightarrow \m{C}$ we let $\phi_{(j)}:= \frac{\partial}{\partial z_{j}} \phi$ for $j =1,2$ and $\phi_{(r,s)} = \frac{\partial^{2}}{\partial z_{s}\partial z_{r}} \phi$ for $r,s =1,2$.

\begin{proposition}
Let $\phi$ and $\psi$ be Hermitian Jacobi forms of weights $k_{1}$ and $k_{2}$ and index $m_{1}$ and $m_{2}$ respectively. Then 
\begin{align*}
& (i)\q   m_{1} \phi \psi_{(2)} - m_{2} \psi \phi_{(2)} \mbox{ is a Hermitian Jacobi form of weight } k_{1}+k_{2}+1 \mbox{ and index } m_{1}+ m_{2}.\\
& (ii)\q  \Big( m_{1} \phi \psi_{(1)} - m_{2} \psi \phi_{(1)} \Big)^{2} + m_{1} \phi^{2} \Big( \psi_{(1)}^{2} - \psi \psi_{(1,1)} \Big) + m_{2} \psi^{2} \Big( \phi_{(1)}^{2} - \phi \phi_{(1,1)} \Big)
\end{align*} 
is a Hermitian Jacobi form of weight $2(k_{1}+k_{2}+1)$ and index $2(m_{1}+m_{2})$.
 
\end{proposition}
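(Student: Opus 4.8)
The plan is to verify the two transformation laws (\ref{jacobi1}) and (\ref{jacobi2}) directly for each displayed expression; holomorphy is immediate, and the Fourier-support condition $nm\ge N(r)$ is dealt with at the end. This is the Hermitian counterpart of \cite[Theorem 9.5]{zagier}, so the only real work is cross-term bookkeeping together with seeing how the extra unit $\epsilon\in\mc{O}_K^{\times}$ enters. The first step is to differentiate the defining relations of $\phi$ and $\psi$. Differentiating $\phi=\phi|_{k_1,m_1}M$ in $z_1$ and applying the chain rule to the automorphy factor $e^{-2\pi i m c z_1z_2/(c\tau+d)}$ of (\ref{jacobi1}) gives, for $M=\left(\begin{smallmatrix}a&b\\c&d\end{smallmatrix}\right)\in SL(2,\m{Z})$,
\[ \phi_{(1)}|_{k_1+1,m_1}M=\phi_{(1)}+\tfrac{2\pi i m_1 c z_2}{c\tau+d}\,\phi,\qquad \phi_{(1,1)}|_{k_1+2,m_1}M=\phi_{(1,1)}+\tfrac{4\pi i m_1 c z_2}{c\tau+d}\,\phi_{(1)}+\big(\tfrac{2\pi i m_1 c z_2}{c\tau+d}\big)^{2}\phi, \]
and likewise $\phi_{(2)}|_{k_1+1,m_1}M=\phi_{(2)}+\tfrac{2\pi i m_1 c z_1}{c\tau+d}\,\phi$; the same holds for $\psi$. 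From (\ref{jacobi2}) one gets $\phi_{(1)}|_{k_1+1,m_1}[\lambda,\mu]=\phi_{(1)}-2\pi i m_1\bar\lambda\,\phi$ and $\phi_{(2)}|_{k_1+1,m_1}[\lambda,\mu]=\phi_{(2)}-2\pi i m_1\lambda\,\phi$, and from the unit part of (\ref{jacobi1}), using $N(\epsilon)=1$, one finds $\phi_{(2)}|_{k_1+1}\epsilon I=\phi_{(2)}$, whereas $\phi_{(1)}|_{k_1+1}\epsilon I=\epsilon^{-2}\phi_{(1)}$ and $\phi_{(1,1)}|_{k_1+2}\epsilon I=\epsilon^{-4}\phi_{(1,1)}$.

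For (i): since $\phi\psi$ has index $m_1+m_2$, the error terms produced by $|M$ in $m_1\phi\,\psi_{(2)}$ and in $m_2\psi\,\phi_{(2)}$ are both equal to $\tfrac{2\pi i m_1m_2 c z_1}{c\tau+d}\,\phi\psi$ and cancel in the difference; the same cancellation (with common error term $-2\pi i m_1m_2\lambda\,\phi\psi$) occurs for $[\lambda,\mu]$; and $\epsilon I$-invariance at weight $k_1+k_2+1$ is immediate from $\phi_{(2)}|_{k_1+1}\epsilon I=\phi_{(2)}$. Hence $m_1\phi\psi_{(2)}-m_2\psi\phi_{(2)}$ satisfies (\ref{jacobi1}) and (\ref{jacobi2}) with weight $k_1+k_2+1$ and index $m_1+m_2$.

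For (ii): I would write the displayed expression as $\Xi^{2}+m_1\phi^{2}P+m_2\psi^{2}Q$, where $\Xi:=m_1\phi\psi_{(1)}-m_2\psi\phi_{(1)}$, $P:=\psi_{(1)}^{2}-\psi\psi_{(1,1)}$, $Q:=\phi_{(1)}^{2}-\phi\phi_{(1,1)}$, and show each summand separately lies in $J_{2(k_1+k_2+1),\,2(m_1+m_2)}(\mc{O}_K)$. Exactly as in (i), $\Xi$ transforms like a Jacobi form of weight $k_1+k_2+1$ and index $m_1+m_2$ under $M$ and under $[\lambda,\mu]$, but acquires the factor $\epsilon^{-2}$ under $\epsilon I$; squaring removes this since $\epsilon^{4}=1$ for every $\epsilon\in\mc{O}_K^{\times}$. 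Feeding the formulas above into $P$, the terms linear and quadratic in $\tfrac{2\pi i m_2 c z_2}{c\tau+d}$ (resp. in $2\pi i m_2\bar\lambda$) cancel pairwise, so $P$ is $M$- and $[\lambda,\mu]$-invariant of weight $2k_2+2$ and index $2m_2$, with $P|_{2k_2+2}\epsilon I=\epsilon^{-4}P$; multiplying by the genuine Hermitian Jacobi form $\phi^{2}$ (weight $2k_1$, index $2m_1$, $\epsilon I$-invariant) absorbs the $\epsilon^{-4}$, and symmetrically for $m_2\psi^{2}Q$. Adding the three pieces yields the claim.

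It remains to observe that each summand is a finite $\m{C}$-linear combination of products of Hermitian Jacobi forms with their $\partial/\partial z_j$'s, and $\partial/\partial z_j$ merely multiplies the $(n,r)$-th Fourier coefficient by $2\pi i r$ or $2\pi i\bar r$; so the support condition $nm\ge N(r)$ is inherited from that of $\phi$ and $\psi$ by the same elementary estimate used for classical Jacobi forms, namely $(n_1+n_2)(m_1+m_2)\ge N(r_1)+N(r_2)+2\sqrt{N(r_1)N(r_2)}\ge N(r_1+r_2)$, whose first inequality is $n_1m_2+n_2m_1\ge 2\sqrt{(n_1m_1)(n_2m_2)}$. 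The only laborious point is the cross-term bookkeeping in the $|M$-transformations of $P$ and $\Xi$; there is no conceptual obstacle, since these nonlinear combinations are built precisely so that those cross-terms cancel, the single new ingredient over \cite{zagier} being that one must pass to $\Xi^{2}$ (effectively a fourth power of the derivative data) in order to kill the $\epsilon^{-2}$-twist introduced by the units of $\mc{O}_K$.
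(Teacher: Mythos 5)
Your proof is correct, but it takes a genuinely different route from the paper's. The paper follows \cite[Theorem 9.5]{zagier} verbatim: it forms the meromorphic index-zero Hermitian Jacobi form $\Phi=\phi^{m_2}/\psi^{m_1}$ of weight $k_1m_2-k_2m_1$, notes that $\Phi_{(2)}$ and $\Phi_{(1,1)}$ are again meromorphic index-zero forms of weights raised by $1$ and $2$ (this is where $\epsilon^4=1$ for $\epsilon\in\mc{O}_K^{\times}$ enters), and reads off (i) and (ii) as the numerators of $\Phi_{(2)}$ and $\Phi_{(1,1)}$ after clearing the meromorphic prefactors $\phi^{m_2-1}/\psi^{m_1+1}$ and $\phi^{m_2-2}/\psi^{m_1+2}$. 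You instead verify the transformation laws~(\ref{jacobi1}) and~(\ref{jacobi2}) directly by differentiating them; your slash formulas for $\phi_{(1)},\phi_{(2)},\phi_{(1,1)}$ under $M$, $[\lambda,\mu]$ and $\epsilon I$ are all correct, as is the decomposition of (ii) into the three separately invariant pieces $\Xi^2$, $m_1\phi^2P$, $m_2\psi^2Q$ and the observation that squaring $\Xi$ kills the $\epsilon^{-2}$-twist coming from the units. What your approach buys: it is entirely elementary, avoids meromorphic forms (and hence the paper's case distinction on the sign of $k_1m_2-k_2m_1$ and the tacit assumption $\psi\not\equiv 0$), and it handles the Fourier-support condition $nm\geq N(r)$ explicitly — your chain $(n_1+n_2)(m_1+m_2)\geq N(r_1)+N(r_2)+2\sqrt{N(r_1)N(r_2)}\geq N(r_1+r_2)$ is exactly right, where the paper only says this is ``easy to see''. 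What the quotient trick buys is brevity: the cross-term cancellations you check by hand fall out automatically from the quotient and product rules. One cosmetic point: since $\epsilon^4=1$, the factors $P$ and $Q$ are already $\epsilon I$-invariant on their own, so nothing actually needs to be ``absorbed'' by $\phi^2$ or $\psi^2$ at that step.
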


\begin{proof}
(i) A meromorphic Hermitian Jacobi form $\phi$ of weight $k$ and index $0$ is a meromorphic function  $\phi \colon \mc{H} \times \m{C}^{2} \rightarrow \m{C}$ satisfying 
\begin{align*}
& \phi \left(M\tau, \frac{\epsilon z_{1}}{c\tau + d},\frac{\bar{\epsilon} z_{2}}{c\tau + d}\right) = \epsilon^{k} (c\tau + d)^{k} \phi( \tau, z_{1}, z_{2}), \, \forall \epsilon \in \mc{O}_{K}^{\times} \q \mbox{and} \\
& \phi (\tau,z_{1} + \lambda \tau + \mu, z_{2} + \bar{\lambda} \tau + \bar{\mu}) = \phi(\tau,z_{1},z_{2})
\mbox{ for all }  \lambda , \mu \mbox{ in }  \mathcal{O}_{K}
\end{align*}

Clearly $\phi_{(2)} $ (resp. $\phi_{(1,1)} $) is a meromorphic Hermitian Jacobi form of weight $k+1$ (resp. $k+2$) and index $0$ (resp. $0$) since in our case $K = \m{Q}(i)$ the unit group is isomorphic to $\m{Z}/4 \m{Z}$. Therefore, given $\phi \in J_{k_{1},m_{1}}(\mc{O}_{K})$ and $\phi \in J_{k_{2},m_{2}}(\mc{O}_{K})$ we consider the quotient $ \Phi:= \frac{\phi^{m_{2}}}{\psi^{m_{1}}} $ which is a meromorphic Hermitian Jacobi form of weight $k_{1}m_{2}-k_{2}m_{1}$ and index $0$. Then 
\[ \Phi_{(2)} = \frac{\phi^{m_{2}-1}}{\psi^{m_{1}+1}} (m_{2}\psi \phi_{(2)} - m_{1} \phi \psi_{(2)}) \]
This proves that $m_{1} \phi \psi_{(2)} - m_{2} \psi \phi_{(2)}$ is a meromorphic Hermitian Jacobi form of weight $k_{1}+k_{2}+1$ and index $m_{1}+ m_{2}$. Holomorphicity at the cusps is easy to see by writing the Fourier expansions of $\phi$ and $\psi$. In case $k_{1}m_{2}-k_{2}m_{1} < 0$, we consider $\Phi_{(2)} = \frac{\psi^{m_{1}}}{\phi^{m_{2}}}$ to get the same result.

(ii) We calculate $\Phi_{(1,1)}$ 
\begin{align*}
\Phi_{(1,1)} &= \frac{\partial}{\partial z_{1}} \Big( \frac{\phi^{m_{2}-1}}{\psi^{m_{1}+1}} ( m_{2} \psi \phi_{(2)} - m_{1} \phi \psi_{(2)} ) \Big) \\
&= \frac{\phi^{m_{2}-2}}{\psi^{m_{1}+2}} \left\{ \Big( m_{1} \phi \psi_{(1)} - m_{2} \psi \phi_{(1)} \Big)^{2} + m_{1} \phi^{2} \Big( \psi_{(1)}^{2} - \psi \psi_{(1,1)} \Big) + m_{2} \psi^{2} \Big( \phi_{(1)}^{2} - \phi \phi_{(1,1)} \Big) \right\}
\end{align*}
The same arguments as in the proof of (i) completes the proof.
\end{proof}

\section{\x{Commutation with Hecke Operators}}\label{hecke}

\begin{mydef} \label{vl}
For $l \in \m{N} $ and $\phi \colon \mc{H} \times \m{C}^{2} \rightarrow \m{C}$ let 
\begin{equation} \label{matrixreps}
\phi|_{k,m}V_{l}(\tau,z_{1},z_{2}) := l^{k-1} \, \underset{\underset{\det \gamma = l}{\gamma \in SL(2,\m{Z})\backslash M\left(2,\m{Z}\right)}}\sum (c\tau +d)^{-k} e\left( -\frac{mlc\,z_{1}z_{2}}{c\tau +d}\right) \phi\left( \gamma \tau, \frac{lz_{1}}{c\tau +d},\frac{lz_{2}}{c\tau +d}\right).
\end{equation}
\end{mydef}
Let $\phi \in J_{k,m}(\mc{O}_{K})$. Then $\phi|_{k,m}V_{l} \in J_{k,ml}(\mc{O}_{K})$ (see \cite{haverkamp}, \cite{zagier}). We consider the Fourier development of the action of $\phi|_{k,m} V_{l} $ in the next Lemma :
\begin{lemma}
\begin{equation}
\phi|_{k,m}V_{l}(\tau,z_{1},z_{2}) = \sum_{n=1}^{\infty} \underset{\underset{nm \geq N(t)}{t \in \mathcal{O}_{K}^{\sharp}}}\sum \left( \underset{\underset{t/a \in \mathcal{O}^{\sharp}}{a|\left(n,l\right)}}\sum a^{k-1} c\left(\frac{nl}{a^{2}},\frac{t}{a} \right) \right) e\left( n\tau + tz_{1} + \bar{t}z_{2} \right) 
\end{equation}
\end{lemma}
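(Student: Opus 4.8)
The plan is to run the standard unfolding argument with an explicit choice of coset representatives, parallel to the classical computation in \cite{zagier}, and then collect terms by reindexing. As representatives for $SL(2,\m{Z})\backslash\{\gamma \in M(2,\m{Z}) : \det\gamma = l\}$ I would take the upper triangular matrices $\gamma = \left(\begin{smallmatrix} a & b \\ 0 & d\end{smallmatrix}\right)$ with $ad = l$, $a,d > 0$ and $0 \le b < d$. For such a $\gamma$ one has $c=0$, $c\tau+d=d$, $\gamma\tau = (a\tau+b)/d$ and $lz_j/(c\tau+d) = az_j$, so Definition~\ref{vl} gives
\[ \phi|_{k,m}V_l(\tau,z_1,z_2) = l^{k-1}\sum_{\substack{ad=l\\ a,d>0}} d^{-k}\sum_{b \bmod d}\phi\Big(\tfrac{a\tau+b}{d},\, az_1,\, az_2\Big). \]
This being a finite sum of translates of $\phi$, all the manipulations below are legitimate termwise, since the Fourier series~(\ref{fourier}) converges absolutely on compact sets.

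Next I would substitute~(\ref{fourier}) and carry out the inner sum over $b$. Writing $\phi = \sum_{n'\ge 0}\sum_{r}c_{\phi}(n',r)\,e(n'\tau+rz_1+\bar r z_2)$ and replacing $\tau$ by $(a\tau+b)/d$ and $z_j$ by $az_j$ produces the phase $e\big(\tfrac{n'a}{d}\tau + \tfrac{n'b}{d} + arz_1 + a\bar r z_2\big)$; since $\sum_{b\bmod d}e(n'b/d)$ equals $d$ if $d\mid n'$ and $0$ otherwise, only the terms with $n' = dn''$ survive, and for these the phase is $e(an''\tau + (ar)z_1 + \overline{(ar)}z_2)$. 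I then set $n := an''$ and $t := ar$, so that $a\mid n$, $t/a = r \in \mc{O}_K^{\sharp}$ and $n' = dn'' = (l/a)(n/a) = ln/a^2$; the coefficient becomes $c_{\phi}(ln/a^2,\,t/a)$, while the scalar collapses to $l^{k-1}d^{1-k} = l^{k-1}(l/a)^{1-k} = a^{k-1}$. As $a$ runs over the positive divisors of $l$ and also $a\mid n$, one has $a\mid(n,l)$, and re-summing over $n$, then $t$, then $a$ yields exactly the asserted formula. The support condition on $(n,t)$ is inherited from $n'm \ge N(r)$, which (using that $N$ is quadratic under rational scaling, $N(t/a)=N(t)/a^2$) reads $lnm \ge N(t)$, i.e.\ $n\,(ml) \ge N(t)$ --- the defining condition for the Fourier coefficients of a form in $J_{k,ml}(\mc{O}_K)$.

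What remains is routine: one recalls that the chosen $\gamma$ form a complete, non-redundant set of coset representatives; one checks that the $n'=0$ contribution reindexes in the same way (it involves only $c_{\phi}(0,0)$ and is absorbed into, or --- when $\phi$ is a cusp form --- simply absent from, the displayed sum); and one verifies the elementary divisibility bookkeeping. The only obstacle is precisely that bookkeeping: keeping the two constraints $a\mid(n,l)$ and $t/a\in\mc{O}_K^{\sharp}$ in play simultaneously while tracking the weight factor $a^{k-1}$. There is no conceptual difficulty beyond the classical $V_l$ computation of \cite{zagier}, of which this is the verbatim analogue.
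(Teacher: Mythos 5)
Your proof is correct and is precisely the standard unfolding computation (upper-triangular coset representatives, summing the roots of unity over $b \bmod d$, then reindexing $n = an''$, $t = ar$) that the paper itself declares standard and omits. Your remarks on the two minor discrepancies in the statement --- the support condition should read $n(ml)\ge N(t)$ for the new index $ml$, and the $n=0$ term involving $c_\phi(0,0)$ is dropped from the displayed sum --- are accurate and worth keeping.
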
 

\begin{proof}
The proof is standard and so we omit it. 
\end{proof}

\begin{proposition} Let $\nu \geq 0, \, \phi \in  J_{k,m}(\mc{O}_{K}), \, l \in \m{N},\, V_{l}$ as in Definition~(\ref{matrixreps}). Then 
\begin{equation}
D_{\nu} \left(\phi |_{k,m} V_{l} \right) = \left( D_{\nu} \phi \right) |_{k+2\nu} T_{l}
\end{equation}
where $T_{l}$ is the usual Hecke operator on elliptic modular forms.
\end{proposition}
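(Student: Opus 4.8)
The plan is to reduce the identity to a computation on the diagonal power-series coefficients, where both $D_\nu$ and $V_l$ act in an explicit way. By Corollary~\ref{modular}, $D_\nu\phi$ is a fixed linear combination of the derivatives $\chi^{(\mu)}_{\nu-\mu,\nu-\mu}(\tau)$, $0 \le \mu \le \nu$, with coefficients depending only on $k$, $m$, $\nu$ (but \emph{not} on $m$ in a way that obstructs us: note $D_\nu(\phi|_{k,m}V_l)$ is computed with index $ml$ in place of $m$, so I must keep track of that). So the first step is to write down, using the Fourier expansion of $\phi|_{k,m}V_l$ from the Lemma just above, the diagonal power-series coefficients $\chi^{V_l}_{\alpha,\alpha}(\tau)$ of $\phi|_{k,m}V_l$ in terms of the $c_\phi(N,t)$, and then apply Corollary~\ref{modular} with $m$ replaced by $ml$.

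Second, I would compute the right-hand side. Since $D_\nu\phi \in M_k$ or $S_{k+2\nu}$, the Hecke operator $T_l$ acts by the classical formula on $q$-expansions: $(D_\nu\phi)|T_l = \sum_n \big(\sum_{a\mid(n,l)} a^{k+2\nu-1} b(nl/a^2)\big)q^n$, where $b(n)$ are the Fourier coefficients of $D_\nu\phi$. From Corollary~\ref{modular} and the Fourier expansion~(\ref{fourier}), I can express $b(n)$ as a finite sum over $r\in\mathcal{O}_K^\sharp$ with $N(r)$ in a suitable range of terms $(\text{polynomial in }n,N(r)\text{ and }(2\pi i)) \cdot c_\phi(n,r)$, coming from differentiating $\chi_{\nu-\mu,\nu-\mu}(\tau)=\sum c_\phi(n,r)q^n$ (the sum over $r$ with $\nu-\mu$ ``slots'', so $r^{\nu-\mu}\bar r^{\nu-\mu}=N(r)^{\nu-\mu}$ appears). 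So both sides become explicit double sums over a divisor $a\mid(n,l)$ and over $r$, and the claim is that they agree term by term.

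Third, the matching. On the left I will have a sum over $a\mid(n,l)$ and $t\in\mathcal{O}_K^\sharp$ with $t/a\in\mathcal{O}_K^\sharp$, of $a^{k-1}c_\phi(nl/a^2,t/a)$ weighted by the Corollary~\ref{modular} coefficients (with index $ml$); on the right a sum over $a\mid(n,l)$ and $r$ with $ar\in$ the range, of $a^{k+2\nu-1}c_\phi(nl/a^2,r)$ times similar weights (with index $m$, at argument $nl/a^2$). Substituting $r = t/a$ identifies the two $c_\phi$-values; it then remains to check the scalar identity
\[
a^{k-1}\cdot(\text{weight}_{ml}\text{ at }(n,t)) \;=\; a^{k+2\nu-1}\cdot(\text{weight}_{m}\text{ at }(nl/a^2,\,t/a)),
\]
which should hold because the index enters the Corollary~\ref{modular} coefficient as $(8\pi i m)^\mu$ and, when differentiating $\chi=\sum c_\phi(N,r)q^N$, a factor $(2\pi i N)^\mu$ times $N(r)^{\nu-\mu}$ appears; replacing $(m,N,r)$ by $(ml,\,nl/a^2\cdot\text{(rescaled)},\,t)$ versus $(m,\,n,\,t/a)$ produces exactly the compensating powers of $l$ and $a^2$ to yield the extra $a^{2\nu}$. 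This is the step I expect to be the main obstacle: it is a bookkeeping identity among binomial/factorial coefficients and powers of $l$, $a$, $m$, $2\pi i$, and getting the index shift $m\mapsto ml$ to interact correctly with the rescaling $r\mapsto t/a$, $n\mapsto nl/a^2$ is where sign or exponent errors are easy to make. Once that scalar identity is verified, summing over $a$ and $r$ (equivalently $t$) gives $D_\nu(\phi|_{k,m}V_l) = (D_\nu\phi)|_{k+2\nu}T_l$, and the cases $\nu=0$ (modular forms) and $\nu\ge1$ (cusp forms) are handled uniformly since the computation never used cuspidality.
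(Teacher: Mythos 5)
Your route is genuinely different from the paper's and it does work, but you should be aware of both points. The paper argues structurally: since $D_\nu$ is by construction the composite of the heat operators $L_{k,m},\dots,L_{k+2\nu-2,m}$ followed by evaluation at $z_1=z_2=0$, it suffices to check two commutative diagrams — that $V_l$ commutes with each $L$ (using that $V_l$ sends the diagonal part $\phi_0$ to $l^{k/2-1}\sum_M(\phi_0|M)(\tau,\sqrt{l}z_1,\sqrt{l}z_2)$ and that $L_{k,m}$ commutes with $|_{k,m}M$ by Lemma~\ref{|M inv1}), and that evaluation at $0$ intertwines $V_l$ with $T_l$. That argument never touches the explicit coefficient formula. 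Your direct Fourier-coefficient computation is heavier but closes cleanly, and the ``scalar identity'' you flag as the main obstacle is not actually delicate: writing the $n$-th coefficient of $D_\nu\phi$ (index $m$) as $\nu!(2\pi i)^{2\nu}\sum_r c_\phi(n,r)\,Q_\nu\bigl(mn,N(r)\bigr)$ with
\[
Q_\nu(x,y)=\sum_{\mu=0}^{\nu}\frac{(k+2\nu-\mu-2)!}{\mu!\,((\nu-\mu)!)^2\,(k+\nu-2)!}\,(4x)^{\mu}(-4y)^{\nu-\mu},
\]
one sees $Q_\nu$ is homogeneous of degree $\nu$ in the pair $(x,y)$; under $t=ar$ one has $N(t)=a^2N(r)$ and $mln=a^2\cdot m(nl/a^2)$, so $Q_\nu(mln,N(t))=a^{2\nu}Q_\nu\bigl(m(nl/a^2),N(r)\bigr)$, and the factor $a^{2\nu}$ converts $a^{k-1}$ into $a^{k+2\nu-1}$, exactly the $T_l$ weight in weight $k+2\nu$. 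Since you left that verification as an assertion, your write-up as it stands is a correct plan rather than a finished proof; supplying the homogeneity observation completes it. What your approach buys is the explicit $q$-expansion of $D_\nu(\phi|_{k,m}V_l)$; what the paper's buys is brevity and independence from Corollary~\ref{modular}.
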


\begin{proof}
From the definition of $D_{\nu}$ operators in~(\ref{dnu}) it is enough to prove that the following diagrams are commutative for all $k,m$ :
\[ \begin{CD} 
J^{0}_{k,m}(\mc{O}_{K})    @>L_{k,m}>>  J^{0}_{k+2,m}(\mc{O}_{K})\\
@VV{V_{l}}V                @VV{V_{l}}V\\
J^{0}_{k,ml}(\mc{O}_{K})    @>L_{k,m}>>  J^{0}_{k+2,ml}(\mc{O}_{K}) 
\end{CD} \qq , \qq 
 \begin{CD} 
J^{0}_{k,m}(\mc{O}_{K})    @>z_{1}=z_{2}=0>>  M_{k}\\
@VV{V_{l}}V                      @VV{T_{l}}V\\
J^{0}_{k,ml}(\mc{O}_{K})    @>z_{1}=z_{2}=0>>  M_{k}
\end{CD} \] 
The first diagram is commutative since $V_{l}$ maps $\phi_{0}$ (the diagonal part of $\phi$) to \\ $l^{\frac{k}{2}-1} \underset{M}\sum \left(\phi_{0}|M \right) \left(\tau,\sqrt{l}z_{1},\sqrt{l}z_{2}\right)$ and $L_{k,m}$ commutes with $|_{k,m}M$ ~(\ref{|M inv1}). That the second diagram is commutative follows from~(\ref{vl}) and the definition of $T_{l}.$
\end{proof}

\section{\x{Number of Fourier coefficients that determine $\phi$}} \label{coeffs}
We recall the Theta correspondence between Hermitian Jacobi forms and vector-valued modular forms (\cite{haverkamp/en},\cite{haverkamp}):  Let $\phi \in J_{k,m}(\mc{O}_{K})$ with Fourier expansion~(\ref{fourier})
\[ \phi = \sum_{n = 0}^{\infty} \underset{\underset{nm \geq N(r)}{r \in \mathcal{O}_{K}^{\sharp}}}\sum c_{\phi}(n , r) e^{ 2 \pi i  \left( n \tau + r z_{1} + \bar{r} z_{2} \right)}\]
It is known (\cite{haverkamp/en}, \cite{haverkamp}) that $c_{\phi}(n,r)$ depends only on $r \pmod{ m\mathcal{O}_{K}}  \mbox{ and }  D(n,r) = nm - N(r)$ where $N: K \rightarrow \m{Q}$ is the norm map, defining $c_{s}(L) := \left \{\begin{array}{cc} 
c_{\phi}(n,r) &  \mbox{if} \q r \equiv s \pmod{ m\mathcal{O}_{K}} \q \mbox{and} \q L = 4D(n,r) \\
0  & \mbox{otherwise} \\
\end{array} \right.$ 
where $s \in \mathcal{O}_{K}^{\sharp}/m \mathcal{O}_{K}$ and $L \in \m{Z},$ we can rewrite the Fourier expansion of $\phi$ as (Theta decomposition): \begin{align}
\phi(\tau,z_{1},z_{2}) = \underset{s \in \mathcal{O}_{K}^{\sharp}/m \mathcal{O}_{K}}\sum h_{s}(\tau) \cdot \theta^{H}_{m,s}(\tau,z_{1},z_{2}), \q \mbox{ where } \nonumber \\
 h_{s}(\tau) := \underset{N(s)+L/4 \in m \m{Z}}{\sum_{L=0}^{\infty}} c_{s}(L) e^{ \frac{2 \pi i L \tau}{4m}} \mbox{ and } \theta^{H}_{m,s}(\tau,z_{1},z_{2}):= \underset{r \equiv s(mod \, m\mathcal{O}_{K})}\sum e \left(\frac{N(r)}{m}\tau + rz_{1}+\bar{r}z_{2} \right).  \nonumber \\
\end{align}
Further if we let $\Theta^{H}_{m}(\tau,z_{1},z_{2}) := \left(  \theta^{H}_{m,s}(\tau,z_{1},z_{2}) \right)_{s \in \mathcal{O}_{K}^{\sharp}/m \mathcal{O}_{K}} \in \m{C}^{4m^{2}},$ then we have \cite[Theorem 2]{haverkamp/en}:
\begin{theorem} \label{theta} For $g = (g_{s})_{s \in \mathcal{O}_{K}^{\sharp}/m \mathcal{O}_{K}}, g_{s}: \mathcal{H} \rightarrow \m{C}$ holomorphic, the following are equivalent : \begin{align*}
& \mbox{ (i) } {^{t}\Theta^{H}_{m}} g \in J_{k,m}(\mc{O}_{K})\\
& \mbox{ (ii) } \parallel g(\tau) \parallel \mbox{ is bounded as } Im(\tau) \rightarrow \infty \q \mbox{ and } \q g|_{k-1}M = \overline{U_{m}(M)}g \\
& \mbox{ for all } M \in \Gamma_{1}(\mathcal{O}_{K}) = \left\{\epsilon M | \epsilon \in \mathcal{O}_{K}^{\times} , M \in SL(2,\m{Z}) \right\}  \end{align*}
where $\Theta^{H}_{m}|_{1,m}M = U_{m}(M) \cdot \Theta^{H}_{m}|_{1,m}$ is it's functional equation and $U_{m}:\Gamma_{1}(\mathcal{O}) \rightarrow U(4m^{2})$ is a homomorphism defined by it ($U(n)$ is the group of $n \times n$ unitary matrices).   
\end{theorem}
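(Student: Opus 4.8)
The plan is to pass back and forth across the Theta decomposition $\phi={}^{t}\Theta^{H}_{m}\,g$ with $g=(h_{s})_{s}$, transporting the transformation behaviour of $\phi$ onto $g$ and back; the only genuinely new ingredient is the modularity of the Hermitian theta vector $\Theta^{H}_{m}$ itself.

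First I would record the two facts underlying the Theta decomposition set up just before the statement. Applying~(\ref{jacobi2}) with $\lambda=0$ imposes nothing, since $r\mu+\bar r\bar\mu=-\mathrm{Im}(\rho\mu)\in\m{Z}$ whenever $r=\frac{i}{2}\rho$ with $\rho,\mu\in\mc{O}_{K}$; applying it with $\mu=0$ and general $\lambda\in\mc{O}_{K}$ and comparing Fourier coefficients in~(\ref{fourier}) gives $c_{\phi}(n,r)=c_{\phi}(n',r')$ with $r'=r+m\bar\lambda$ and $n'=n+r\lambda+\bar r\bar\lambda+mN(\lambda)$, where one checks $n'm-N(r')=nm-N(r)$, so $c_{\phi}(n,r)$ depends only on $r\bmod m\mc{O}_{K}$ and on $D(n,r)$. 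This legitimizes the $c_{s}(L)$ and the regrouping of~(\ref{fourier}) into $\phi={}^{t}\Theta^{H}_{m}\,g$. Moreover the $\theta^{H}_{m,s}$ have pairwise disjoint $(z_{1},z_{2})$-exponent supports (each uses only $r\equiv s\pmod{m\mc{O}_{K}}$) and are bounded as $\mathrm{Im}(\tau)\to\infty$; hence holomorphy of $\phi$ with the shape of~(\ref{fourier}) ($n\ge 0$, $nm\ge N(r)$) matches, on the $g$-side, holomorphy of the $h_{s}$ together with boundedness of $\|g(\tau)\|$ as $\mathrm{Im}(\tau)\to\infty$ (for the converse one first uses the transformation law in (ii) to equip each $h_{s}$ with a Fourier expansion). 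This settles the growth/cusp conditions in both directions at once.

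For (i)$\Rightarrow$(ii) I would use the functional equation $\Theta^{H}_{m}|_{1,m}M=U_{m}(M)\,\Theta^{H}_{m}$ with $U_{m}(M)$ unitary (see the final paragraph). Since the weight-$k$ index-$m$ slash factors compatibly as a weight-$1$ index-$m$ slash (carrying all the $z$-dependence) composed with a weight-$(k-1)$ slash acting only on $\tau$, one has for $M\in\Gamma_{1}(\mc{O}_{K})$
\[
\phi|_{k,m}M={}^{t}\!\big(\Theta^{H}_{m}|_{1,m}M\big)\,\big(g|_{k-1}M\big)={}^{t}\Theta^{H}_{m}\;{}^{t}U_{m}(M)\,\big(g|_{k-1}M\big).
\]
The $\theta^{H}_{m,s}$ are linearly independent over the ring of holomorphic functions of $\tau$ (disjoint $z$-supports), so $\phi|_{k,m}M=\phi={}^{t}\Theta^{H}_{m}\,g$ forces ${}^{t}U_{m}(M)\,(g|_{k-1}M)=g$, i.e.\ $g|_{k-1}M=\overline{U_{m}(M)}\,g$, using that $U_{m}(M)$ unitary gives $\big({}^{t}U_{m}(M)\big)^{-1}=\overline{U_{m}(M)}$. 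Conversely, for $g$ as in (ii) set $\phi:={}^{t}\Theta^{H}_{m}\,g$: the same display together with ${}^{t}U_{m}(M)\,\overline{U_{m}(M)}=I$ gives $\phi|_{k,m}M=\phi$ for all $M\in\Gamma_{1}(\mc{O}_{K})$, hence~(\ref{jacobi1}); the identity $\theta^{H}_{m,s}|_{1,m}[\lambda,\mu]=\theta^{H}_{m,s}$ (reindex the defining sum, using $r\mu+\bar r\bar\mu\in\m{Z}$) gives~(\ref{jacobi2}); and holomorphy plus the growth argument above gives holomorphy at the cusp, so $\phi\in J_{k,m}(\mc{O}_{K})$.

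The main obstacle, and the only step carrying real content, is the modularity of $\Theta^{H}_{m}$: that $M\mapsto U_{m}(M)$ is a well-defined unitary homomorphism $\Gamma_{1}(\mc{O}_{K})\to U(4m^{2})$. I would check this on generators. For $\left(\begin{smallmatrix}1&1\\0&1\end{smallmatrix}\right)$ and for the unit $\epsilon=i$ the action is a permutation of the $\theta^{H}_{m,s}$ up to a root of unity, immediate from the defining series (for $\epsilon=i$, $\theta^{H}_{m,s}|_{1,m}\epsilon I=\bar\epsilon\,\theta^{H}_{m,\epsilon s}$). For $S=\left(\begin{smallmatrix}0&-1\\1&0\end{smallmatrix}\right)$ one applies Poisson summation over the lattice $\mc{O}_{K}\subset\m{C}$ (the theta inversion formula), which rewrites $\theta^{H}_{m,s}(-1/\tau,\cdot)$ as a finite combination of the $\theta^{H}_{m,s'}(\tau,\cdot)$ with an overall factor $\tau$ and Gauss-sum coefficients indexed by $\mc{O}_{K}/m\mc{O}_{K}$; one then checks unitarity of the resulting $4m^{2}\times 4m^{2}$ matrix by a Gauss-sum computation. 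The cocycle relations $U_{m}(M_{1}M_{2})=U_{m}(M_{1})U_{m}(M_{2})$ follow automatically from associativity of the slash action once $\Theta^{H}_{m}$ transforms correctly on these generators. This is precisely \cite[Theorem 2]{haverkamp/en}, which one may simply invoke; the Poisson-summation step for $S$ is where the labour lies.
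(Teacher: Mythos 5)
The paper offers no proof of this statement: Theorem~\ref{theta} is imported verbatim as \cite[Theorem 2]{haverkamp/en}, so there is nothing internal to compare your argument against. Judged on its own, your outline is the standard and correct proof of such a theta correspondence, and the reductions you carry out explicitly are sound: the invariance of $c_{\phi}(n,r)$ under $r\mapsto r+m\bar\lambda$ with $D(n,r)$ fixed (which legitimizes the decomposition $\phi={}^{t}\Theta^{H}_{m}g$), the identity $\phi|_{k,m}M={}^{t}\Theta^{H}_{m}\,{}^{t}U_{m}(M)(g|_{k-1}M)$ via the weight splitting $k=1+(k-1)$, the linear independence of the $\theta^{H}_{m,s}$ from their disjoint $(z_{1},z_{2})$-supports, the unitarity step $({}^{t}U_{m}(M))^{-1}=\overline{U_{m}(M)}$, the invariance $\theta^{H}_{m,s}|_{1,m}[\lambda,\mu]=\theta^{H}_{m,s}$ using $r\mu+\bar r\bar\mu\in\m{Z}$ for $r\in\mc{O}_{K}^{\sharp}$, and the matching of the cusp condition $nm\ge N(r)$ with boundedness of $\|g\|$. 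The one caveat is that the single step carrying real content --- existence of the functional equation for $\Theta^{H}_{m}$ under $S$ with a \emph{unitary} matrix $U_{m}(S)$, via Poisson summation over $\mc{O}_{K}$ and a Gauss-sum computation --- is itself only sketched and ultimately deferred to \cite{haverkamp/en}; so in effect your proof is a correct reduction of the theorem to that one lemma, which is exactly the level of detail at which the paper treats it (namely, citation).
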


\begin{myrem} \label{thetacompmod} 
We have $h_{s} \in M_{k-1}\left( \Gamma(4m)\right)$. This follows from Theorem~(\ref{theta})(ii) and the fact that $\Gamma(4m) \subset Ker(U_{m})$ (see \cite{haverkamp/en},\cite{haverkamp} for a proof).
\end{myrem}

\begin{mydef}
For a positive integer $m$, define 
\begin{equation}
\kappa(k,m) = \left[ \frac{4m^{2}(k-1)}{3} \underset{p|4m}\prod \left(1- \frac{1}{p^{2}}\right)+\frac{m}{2}\right],
\end{equation}
 $\left[ \cdot \right]$ being the greatest-integer function. Note that $\kappa(k,m)$ also equals $\left[ \frac{\omega}{m} \right] $, where
\begin{equation}
 \omega :=    \left[ SL(2,\m{Z}) \colon \Gamma(4m) \right] \cdot \frac{k-1}{48}+\frac{m^{2}}{2} . \end{equation}
\end{mydef}

Let $r(n)$ denote the number of integral solutions of $x^{2}+y^{2}=n$. It is well known that $r(n) = 4 \delta(n)$ (see \cite{hardy} for instance), where $\delta(n) = \underset{d|n}\sum \left( \frac{-4}{d} \right)$, $\left( \frac{-4}{\cdot} \right)$ being the unique primitive Dirichlet character modulo $4$.

\flushleft{For $4m|l $, let $R(l) := R_{m}(l) = \underset{0 \leq n \leq \frac{l}{4m}}\sum \,\underset{0 \leq d \leq 4m n}\sum r(d)$.}

\begin{proposition}\label{no. of coeffs}
In the Fourier expansion~(\ref{fourier}) of a Hermitian Jacobi form $\phi$, suppose that $c_{\phi}(n,r) = 0$ for $0 \leq n \leq \kappa(k,m)$. Then $\phi \equiv 0$; i.e., $\phi$ ``is determined'' by the first $R(4m \,\kappa(k,m))$ of it's Fourier coefficients. 
\end{proposition}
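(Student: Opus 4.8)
The plan is to use the Theta decomposition of $\phi$ together with the valence inequality for vector-valued modular forms. Write $\phi = \sum_{s \in \mathcal{O}_{K}^{\sharp}/m\mathcal{O}_{K}} h_{s}(\tau) \cdot \theta^{H}_{m,s}$ as recalled above, and recall from Remark~\ref{thetacompmod} that each $h_{s} \in M_{k-1}(\Gamma(4m))$. The vanishing of $c_{\phi}(n,r)$ for $0 \le n \le \kappa(k,m)$ translates, via the definitions $c_{s}(L) = c_{\phi}(n,r)$ and $L = 4D(n,r) = 4(nm - N(r))$, into the vanishing of the first several Fourier coefficients of every component $h_{s}$. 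Concretely, $h_{s}(\tau) = \sum_{L} c_{s}(L) e^{2\pi i L\tau/(4m)}$ has its coefficient indexed by $L$ controlled by pairs $(n,r)$ with $4D(n,r) = L$; the smallest $n$ with $nm - N(r)$ reaching a given size grows, so killing all $c_{\phi}(n,r)$ with $n \le \kappa(k,m)$ kills all coefficients of $h_{s}$ up to exponent roughly $\kappa(k,m)/m$ in the variable $q_{4m} = e^{2\pi i\tau/(4m)}$. This is exactly the reason $\kappa(k,m)$ is defined as $[\omega/m]$.

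First I would make the order-of-vanishing bookkeeping precise: for each $s$, determine the largest $L_{0}$ such that the hypothesis forces $c_{s}(L) = 0$ for all $L \le L_{0}$ with $L \equiv -4N(s) \pmod{4m}$, and check that $L_{0}/(4m) \ge \omega/[\,SL(2,\m{Z}):\Gamma(4m)\,] \cdot$ (the relevant normalization). Then I would invoke the standard valence formula for $\Gamma(4m)$: a nonzero modular form of weight $k-1$ for $\Gamma(4m)$ has total vanishing order (summed over all cusps, in appropriate local uniformizers) equal to $[\,SL(2,\m{Z}):\Gamma(4m)\,]\cdot (k-1)/12$. Since $g = (h_{s})_{s}$ transforms under $\Gamma_{1}(\mathcal{O}_{K})$ with the unitary multiplier $\overline{U_{m}(M)}$, the components $h_{s}$ get permuted (up to roots of unity) among themselves by $SL(2,\m{Z})/\Gamma(4m)$; hence the order of vanishing of $\prod_{s} h_{s}$, or more efficiently of a single $h_{s}$ summed over the orbit of cusps, is again governed by $[\,SL(2,\m{Z}):\Gamma(4m)\,]\cdot(k-1)/12$ times the number of components. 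Comparing this finite total order with the lower bound on the order of vanishing at the cusp $\infty$ forced by the hypothesis — where the extra $m^{2}/2$ term and the factor $m$ in passing from $\omega$ to $\kappa(k,m)$ enter — yields a contradiction unless every $h_{s} \equiv 0$, i.e. $\phi \equiv 0$. Finally, the count $R(4m\,\kappa(k,m))$ is just the number of admissible pairs $(n,r)$ with $0 \le n \le \kappa(k,m)$: for fixed $n$ the number of $r \in \mathcal{O}_{K}^{\sharp}$ with $N(r) \le nm$ is $\sum_{0 \le d \le 4mn} r(d)$ after scaling $r \mapsto$ integer point, so summing over $n$ gives $R(4m\,\kappa(k,m))$.

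The main obstacle I anticipate is the precise accounting of the order of vanishing of a vector-valued form under the multiplier system $U_{m}$, together with the passage between the order of vanishing at the single cusp $\infty$ (which is what the Fourier hypothesis directly controls) and the total order of vanishing over all cusps of $\Gamma(4m)$ (which is what the valence formula controls). One must argue that because $U_{m}$ comes from the genuinely $SL(2,\m{Z})$-equivariant theta system $\Theta^{H}_{m}$, the $\Gamma(4m)$-orbit of the cusp $\infty$ has all components of $g$ vanishing to a comparable order there, so that the valence bound applied cusp-by-cusp collapses to a clean inequality; the constant $m/2$ in $\kappa(k,m)$ (equivalently $m^{2}/2$ in $\omega$) is precisely the slack needed to absorb the contributions of $g$ at the cusps other than $\infty$ and the width discrepancies. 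Handling that carefully — and confirming the floor functions line up — is the delicate part; everything else (the translation of the hypothesis into coefficient vanishing, the combinatorial identity for $R$, bilinearity) is routine.
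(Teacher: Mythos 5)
Your approach is essentially the paper's: theta-decompose $\phi$, note $h_{s}\in M_{k-1}(\Gamma(4m))$, translate the hypothesis into the vanishing of $c_{s}(L)$ for all $L$ up to the Sturm bound $[SL(2,\m{Z}):\Gamma(4m)]\cdot\frac{k-1}{12}$, and conclude $h_{s}\equiv 0$ (the paper simply cites Schoeneberg, p.~120, for this last step; since each $h_{s}$ is a single holomorphic form on $\Gamma(4m)$, the valence argument at the cusp $\infty$ alone suffices and no analysis of the $U_{m}$-multiplier or of cusp orbits is needed). One correction to your bookkeeping: the $m/2$ in $\kappa(k,m)$ (equivalently the $m^{2}/2$ in $\omega$) does not absorb contributions from other cusps but comes from the choice of representatives $s=\frac{p}{2}+i\frac{q}{2}$ with $(p,q)\in[-m,m-1]^{2}$, for which $\max_{s}N(s)=\frac{m^{2}}{2}$, so that $N(2s)+L=4mn$ with $L$ at most the Sturm bound forces $n\le\omega/m$ and hence $n\le\kappa(k,m)$.
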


\begin{proof}
The proof will use the Theta decomposition~(\ref{theta}). By Remark~\ref{thetacompmod}, only finitely many coefficients determine each $h_{s}.$ Namely, if the Fourier coefficients $c_{s}(L)$ of $h_{s}$ vanish for all $0 \leq L \leq \kappa(k,m)$, then $h_{s}$ itself vanish (see \cite{schoenberg} p.120).

Let $1 \leq L \leq  \left[ SL(2,\m{Z}) \colon \Gamma(4m) \right] \cdot \frac{k-1}{12} , s \in  \mathcal{O}_{K}^{\sharp}/m \mathcal{O}_{K}$. From the definition of $c_{s}(L)$ above, $c_{s}(L)= 0$ unless $ N(2s) + L \equiv 0 \pmod{4m \m{Z}}$. As a set of representatives $\mathcal{S}$ of $\mathcal{O}_{K}^{\sharp}$ in $\mathcal{O}_{K}^{\sharp}/m \mathcal{O}_{K}$ we take 
\begin{equation}\label{diffreps}
\mathcal{S} \colon \left\{ \frac{p}{2}+i \frac{q}{2} \right\}\, , \mbox{ where } (p,q) \in \left[-m , m-1\right] \times \left[-m , m-1\right] 
\end{equation}

With the above choice, note that $\underset{s \in \mathcal{S}}{max}(N(s)) = \frac{m^{2}}{2}.$ Therefore, when $N(2s)+L = 4mn \in 4m \m{Z}$, the bound on $L$ implies that $0 \leq n \leq \kappa(k,m)$. Since there are $\underset{0 \leq d \leq 4mn}\sum r(d)$ coefficients $c_{\phi}(n,r)$ for each $n \geq 0$, this proves the proposition. 
\end{proof}

\begin{mydef}[\cite{haverkamp/en},\cite{haverkamp}]\label{spez} We define the following subspace of $J_{k,m}(\mc{O}_{K})$ :
\[ J^{Spez}_{k,m}(\mc{O}_{K} ) := \left\{ \phi \in  J_{k,m}(\mc{O}_{K}) | \, c_{\phi}(n,r) \mbox{ depends only on } nm-N(r) \right\} \]
\end{mydef}

\begin{proposition}
Suppose that in the power series decomposition~(\ref{powerseries}) of $\phi \in J^{Spez}_{k,m}(\mc{O}_{K} )$, $\chi_{\nu,\nu}=0$ for all $0 \leq \nu \leq R(4m \, \kappa(k,m))$. Then $\phi \equiv 0$.
\end{proposition}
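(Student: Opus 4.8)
The plan is to pass from the vanishing of the diagonal coefficients $\chi_{\nu,\nu}$ to the vanishing of enough Fourier coefficients $c_{\phi}(n,r)$ that Proposition~\ref{no. of coeffs} applies. Substituting the Taylor expansion $e^{2\pi i(rz_{1}+\bar r z_{2})}=\sum_{\alpha,\beta\ge0}\frac{(2\pi i r)^{\alpha}(2\pi i\bar r)^{\beta}}{\alpha!\,\beta!}z_{1}^{\alpha}z_{2}^{\beta}$ into the Fourier expansion~(\ref{fourier}) and comparing with~(\ref{powerseries}) gives
\[ \chi_{\nu,\nu}(\tau)=\frac{(-4\pi^{2})^{\nu}}{(\nu!)^{2}}\sum_{n\ge0}\Big(\ \sum_{\substack{r\in\mc{O}_{K}^{\sharp}\\ nm\ge N(r)}}c_{\phi}(n,r)\,N(r)^{\nu}\Big)\,e^{2\pi i n\tau}. \]
As the scalar $(-4\pi^{2})^{\nu}/(\nu!)^{2}$ is nonzero, the hypothesis that $\chi_{\nu,\nu}\equiv0$ for $0\le\nu\le R(4m\,\kappa(k,m))$ is equivalent to the identities $\sum_{r}c_{\phi}(n,r)N(r)^{\nu}=0$ for every $n\ge0$ and every such $\nu$. (If $k\not\equiv0\pmod4$ then $\chi_{\nu,\nu}\equiv0$ automatically; but then applying~(\ref{jacobi1}) with $M=I$, $\epsilon=i$, which gives $c_{\phi}(n,ir)=i^{-k}c_{\phi}(n,r)$, together with the $Spez$-condition, which forces $c_{\phi}(n,ir)=c_{\phi}(n,r)$ since $N(ir)=N(r)$, yields $c_{\phi}\equiv0$, so $J^{Spez}_{k,m}(\mc{O}_{K})=0$ and there is nothing to prove. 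So we may assume $k\equiv0\pmod4$, as in the rest of the paper.)

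Fix $n$ with $0\le n\le\kappa(k,m)$. Since $\phi\in J^{Spez}_{k,m}(\mc{O}_{K})$, $c_{\phi}(n,r)$ depends only on $nm-N(r)$, hence (for $n$ fixed) only on the non-negative integer $L:=4N(r)$; write $C(n,L)$ for this common value. Writing $r=\frac{i}{2}(x+iy)$ with $x,y\in\m{Z}$, the number of $r\in\mc{O}_{K}^{\sharp}$ with $4N(r)=L$ is exactly $r(L)$, so grouping the inner sum above by the value of $L$ (which ranges over $0\le L\le 4mn$ with $r(L)>0$) turns the identities into
\[ \sum_{\substack{0\le L\le 4mn\\ r(L)>0}} r(L)\,C(n,L)\,(L/4)^{\nu}=0,\qquad 0\le\nu\le R(4m\,\kappa(k,m)). \]
The coefficient matrix of this linear system in the unknowns $r(L)\,C(n,L)$ is a Vandermonde matrix in the pairwise distinct nodes $L/4$. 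Provided the number of nodes does not exceed $R(4m\,\kappa(k,m))+1$, the system has only the trivial solution, so $r(L)\,C(n,L)=0$; since every $L$ occurring has $r(L)>0$, this gives $C(n,L)=0$, i.e.\ $c_{\phi}(n,r)=0$ for all $r$ with $nm\ge N(r)$.

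It remains to verify the counting inequality and to conclude. For $n\le\kappa(k,m)$ the number of admissible nodes is at most $\#\{L\in\m{Z}:0\le L\le4m\,\kappa(k,m),\ r(L)>0\}$, which is bounded by the number of lattice points in the disc $x^{2}+y^{2}\le4m\,\kappa(k,m)$, i.e.\ by $\sum_{0\le d\le4m\kappa(k,m)}r(d)$; this is the $n=\kappa(k,m)$ summand of $R(4m\,\kappa(k,m))=\sum_{0\le n\le\kappa(k,m)}\sum_{0\le d\le4mn}r(d)$, hence is $\le R(4m\,\kappa(k,m))$. So the Vandermonde system has strictly more equations than unknowns and distinct nodes, and the previous paragraph applies for each $n$ in range: $c_{\phi}(n,r)=0$ whenever $0\le n\le\kappa(k,m)$. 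Proposition~\ref{no. of coeffs} then yields $\phi\equiv0$. The only delicate point is the bookkeeping identifying the number of distinct norm-values $N(r)$ occurring in the relevant range with the quantity $R(4m\,\kappa(k,m))$; everything else is the Vandermonde argument combined with the $Spez$-reduction, in direct analogy with the classical treatment in \cite{zagier}.
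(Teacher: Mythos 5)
Your proof is correct and follows essentially the same route as the paper's: expand the diagonal Taylor coefficients as weighted power sums of $N(r)$, use the $Spez$ condition to collapse the sum over $r$ to a sum over the distinct values $L=4N(r)$ with multiplicities $r(L)$, and invert the resulting Vandermonde system to conclude $c_{\phi}(n,r)=0$ for $0\le n\le\kappa(k,m)$, whence Proposition~\ref{no. of coeffs} applies. Your bookkeeping is in fact slightly tidier than the paper's --- you bound the number of nodes directly by the $n=\kappa(k,m)$ summand of $R(4m\,\kappa(k,m))$, handle $L=0$ uniformly by keeping $\nu=0$ in the system, and observe that the case $k\not\equiv 0\pmod{4}$ is vacuous.
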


\begin{proof}
Since each $\chi_{\nu,\nu}$ is periodic in $\tau$ with period $1$, (this follows by taking $M = \left( \begin{smallmatrix}
1 & 1 \\
0 & 1  \end{smallmatrix}\right)$ in the identity~(\ref{M inv2}) ) and is holomorphic on $\mc{H}$, it has a unique Fourier expansion \[ {\nu !}^{2} \chi_{\nu,\nu} = \underset{n}\sum \left( \underset{r}\sum (-4 \pi^{2}N(r))^{\nu} c(n,r) \right) e(n \tau) \]

Define $\tilde{r}(n) := Card.\left\{ 0 \leq d \leq 4m n, \, d = \Box \right\}$, where $d=\Box$ means $d$ is a sum of two squares. Noticing that $c(n,r) = c(n,r^{\p})$ if $N(r) = N(r^{\p})$, we let $c(n,d) := c(n,r)$, if $d = N(2r)$. Also $\chi_{0,0} \equiv 0$ implies $c(0,0) = 0$. From the hypothesis we get for each $1 \leq n \leq R(4m \, \kappa(k,m))$

\begin{eqnarray*}
\underset{N(2r)=d}{\underset{1 \leq d \leq 4mn}\sum} r(d) d^{\nu}c(n,d) = 0 
\end{eqnarray*} 

For a fixed $n$ as above, considering this equation for $1 \leq \nu \leq \tilde{r}(n)$ we get a $\tilde{r}(n) \times \tilde{r}(n)$ matrix $M(n)$ such that (since $\tilde{r}(n) < R(4m \, \kappa(k,m))$).  \[ M(n) \cdot C(n) = 0,\q \mbox{ where } M(n)_{\nu,d}=r(d)d^{\nu}\q and \q C(n) = (c(n,d))_{1 \leq d \leq 4mn, d=\Box} \] 

Now $\det{M(n)} = c \cdot \det{  V \left(1,2, \cdots \tilde{r}(n)\right)}  \neq 0 $, where $c$ is a non-zero constant, and for integers $a_{1},a_{2}, \cdots a_{l}$, $V(a_{1},a_{2}, \cdots a_{l})$ is the Vandermonde determinant which is non-zero when $a_{i} \neq a_{j} \, \forall i \neq j$. Therefore, $C(n) \equiv 0$.

Doing this for each $0 \leq n \leq R(4m \, \kappa(k,m))$, we get $c(n,r)=0$, for $0 \leq n \leq R(4m \, \kappa(k,m))$, so $\phi \equiv 0$ from the previous Proposition. 
\end{proof}

\begin{theorem}
The map \, $\mc{D} \colon J^{Spez}_{k,m}(\mc{O}_{K} ) \longrightarrow M_{k} \q \underset{1 \leq \nu \leq R(4m \, \kappa(k,m))}\bigoplus S_{k+2\nu} $ \\defined by 
\begin{equation}
\mathcal{D}(\phi) = \left( D_{\nu}\phi \right)_{0 \leq \nu \leq R(4m \, \kappa(k,m))}
\end{equation}
is injective. 
\end{theorem}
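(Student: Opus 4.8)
The plan is to combine Corollary~\ref{modular} with the uniqueness result of the preceding Proposition. The key observation is that the map $\mc{D}$ records, for each $\nu$ in the range $0 \le \nu \le R(4m\,\kappa(k,m))$, the quantity $D_{\nu}\phi$, and by Corollary~\ref{modular} each $D_{\nu}\phi$ is (up to a nonzero constant depending only on $k,m,\nu$) a fixed linear combination of $\chi_{\nu,\nu}(\tau), \chi_{\nu-1,\nu-1}^{(1)}(\tau), \ldots, \chi_{0,0}^{(\nu)}(\tau)$, in which the coefficient of the top term $\chi_{\nu,\nu}$ is nonzero. Thus the linear system expressing $(D_{\nu}\phi)_{\nu}$ in terms of $(\chi_{\nu,\nu})_{\nu}$ is ``upper triangular'' with nonzero diagonal (after differentiating and rearranging), and in fact Remark~\ref{inverse} already writes the explicit inversion: $\chi_{\nu,\nu}$ is a linear combination of $\xi_{\nu}, \xi_{\nu-1}^{(1)}, \ldots, \xi_{0}^{(\nu)}$ where $\xi_{j} = D_{j}\phi$.

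The argument then proceeds as follows. Suppose $\phi \in J^{Spez}_{k,m}(\mc{O}_{K})$ lies in the kernel of $\mc{D}$, so that $D_{\nu}\phi = 0$ for all $0 \le \nu \le R(4m\,\kappa(k,m))$. First I would invoke Remark~\ref{inverse}: since each $\xi_{\nu} = D_{\nu}\phi$ vanishes in this range, the formula there gives $\chi_{\nu,\nu}(\tau) \equiv 0$ for all $0 \le \nu \le R(4m\,\kappa(k,m))$. (One checks the range matches: $\chi_{\nu,\nu}$ depends on $\xi_{\nu-\mu}^{(\mu)}$ for $0 \le \mu \le \nu$, all of which have index $\le \nu \le R(4m\,\kappa(k,m))$, so they all vanish.) Then I would apply the previous Proposition verbatim: it asserts precisely that if $\chi_{\nu,\nu} = 0$ for all $0 \le \nu \le R(4m\,\kappa(k,m))$ and $\phi \in J^{Spez}_{k,m}(\mc{O}_{K})$, then $\phi \equiv 0$. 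Hence $\ker \mc{D} = 0$ and $\mc{D}$ is injective.

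One small point to verify is the target of each component: by Proposition~\ref{dnu}, $D_{0}\phi \in M_{k}$ and $D_{\nu}\phi \in S_{k+2\nu}$ for $\nu \ge 1$, which is exactly how the codomain $M_{k} \oplus \bigoplus_{1 \le \nu \le R(4m\,\kappa(k,m))} S_{k+2\nu}$ is written, so $\mc{D}$ is well-defined; linearity is immediate from linearity of each $D_{\nu}$. The main (and essentially only) obstacle is bookkeeping: one must confirm that the inversion in Remark~\ref{inverse} is valid, i.e. that the leading coefficient $\binom{\nu}{0}(k+2\nu-1)(k+\nu-2)!/(k+2\nu-1)! = (k+\nu-2)!/(k+2\nu-2)!$ times the overall prefactor is nonzero — which it clearly is for $k \ge 4$ — so that no degeneracy in the triangular system spoils the recovery of the $\chi_{\nu,\nu}$. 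Everything else is a direct citation of results already established: Corollary~\ref{modular}/Remark~\ref{inverse} to pass from $D_{\nu}\phi$ back to $\chi_{\nu,\nu}$, and the preceding Proposition to pass from vanishing of the $\chi_{\nu,\nu}$ (on the $Spez$ subspace) to $\phi \equiv 0$.
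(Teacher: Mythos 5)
Your proof is correct and is essentially identical to the paper's: both argue that vanishing of all $D_{\nu}\phi$ in the stated range forces $\chi_{\nu,\nu}\equiv 0$ via the inversion formula of Remark~\ref{inverse}, and then conclude $\phi\equiv 0$ from the preceding Proposition. Your additional checks (well-definedness of the codomain and nonvanishing of the leading coefficient in the triangular system) are sound but not part of the paper's argument.
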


\begin{proof}
If $\phi \in ker \mc{D}$, by definition of $\mc{D}$ we have $\xi_{\nu} := D_{\nu} \phi \equiv 0$ for all $\nu$ as in the Theorem. By Remark~(\ref{inverse}) we obtain $\chi_{\nu,\nu} \equiv 0$ for $0 \leq \nu \leq R(4m \, \kappa(k,m))$. Therefore $\phi \equiv 0$ by the previous Proposition. 
\end{proof}

\section{\textit{Acknowledgements}} The author is grateful to Prof. B. Ramakrishnan for his advice and several helpful discussions and wishes to thank Prof. N. P. Skoruppa for his interest and comments.

\end{document}